\documentclass[12pt,reqno]{article}

\usepackage[usenames]{color}
\usepackage{amssymb}
\usepackage{graphicx}
\usepackage{amscd}
\usepackage{hyperref}


\definecolor{webgreen}{rgb}{0,.5,0}
\definecolor{webbrown}{rgb}{.6,0,0}

\usepackage{color}
\usepackage{fullpage}
\usepackage{float}

\usepackage{graphics,amsmath,amssymb}
\usepackage{amsthm}
\usepackage{amsfonts}
\usepackage{latexsym}
\usepackage{epsf}

\usepackage{tikz}
\usepackage{forest}

\setlength{\textwidth}{6.5in}
\setlength{\oddsidemargin}{.1in}
\setlength{\evensidemargin}{.1in}
\setlength{\topmargin}{-.1in}
\setlength{\textheight}{8.4in}

\newcommand{\seqnum}[1]{\href{http://oeis.org/#1}{\underline{#1}}}

\begin{document}


\theoremstyle{plain}
\newtheorem{theorem}{Theorem}
\newtheorem{corollary}[theorem]{Corollary}
\newtheorem{lemma}[theorem]{Lemma}
\newtheorem{proposition}[theorem]{Proposition}

\theoremstyle{definition}
\newtheorem{definition}[theorem]{Definition}
\newtheorem{example}[theorem]{Example}
\newtheorem{conjecture}[theorem]{Conjecture}
\newtheorem{identity}{Identity}

\theoremstyle{remark}
\newtheorem{remark}[theorem]{Remark}

\begin{center}
\vskip 1cm{\LARGE\bf 
On Base 3/2 and its Sequences
}
\vskip 1cm
\large
Ben Chen, Richard Chen, Joshua Guo, Shane Lee, Neil Malur, Nastia Polina, Poonam Sahoo, Anuj Sakarda, Nathan Sheffield, Armaan Tipirneni\\ 
PRIMES STEP, Room 2-231C\\
Math Dept., MIT\\
77 Mass. Ave \\
Cambridge,  MA 02139  \\
USA \\
\href{mailto: primes.step@gmail.com}{\tt primes.step@gmail.com}
\ \\
Tanya Khovanova \\
Department of Mathematics\\ 
MIT\\
77 Mass. Ave \\
Cambridge,  MA 02139  \\
USA \\
\href{mailto: tanyakh@yahoo.com}{\tt tanyakh@yahoo.com}
\end{center}

\vskip .2 in

\begin{abstract}
We discuss properties of integers in base 3/2. We also introduce many new sequences related to base 3/2. Some sequences discuss patterns related to integers in base 3/2. Other sequence are analogues of famous base-10 sequences: we discuss powers of 3 and 2, Look-and-say, and sorted and reverse sorted Fibonaccis. The eventual behavior of sorted and reverse sorted Fibs leads to special Pinocchio and Oihcconip sequences respectively.
\end{abstract}

\section{Introduction}

What is base 3/2? How does one even think about a fractional base anyway? Our readers will be familiar of course with base 10, or decimal, but there are many uses for other bases, such as base 2, 12, and 60. Base 2, or binary, is useful because there are only two states for each place value, meaning that it can be represented easily by a series of transistors that are either on or off, and forms the basis for machine language. Base 12 and 60 are useful because they have a large number of factors and hence can be divided nicely into smaller increments. We use these bases to partition time. 

One way of thinking about how integer bases such as these work, invented by James Tanton, is the idea of exploding dots \cite{JT}. This idea allows a natural extension into fractional bases. 

Our main base is base 3/2. We explain exploding dots and base 3/2 in detail in Section~\ref{sec:expdots}.

We start by observing how integers are represented in base 3/2. The integers use 0, 1, and 2 as digits. The beginnings of integers are very restricted, but the endings are not. We discuss this in Sections~\ref{sec:patterns}.

Even integers can be naturally placed into vertices of a tree with alternating branches. The base 3/2 representation of an even integer can be read from the tree as explained in Section~\ref{sec:begin}. We devote some time to studying properties of the largest and smallest integers with a given number of digits in base 3/2. We also discuss the connection of base 3/2 to a greedy partition of non-negative integers into subsequences not containing a 3-term arithmetic progression.

In Section~\ref{sec:divisibility} we briefly talk about divisibility properties. We show that divisibility by 5 in base 3/2 is similar to divisibility by 11 in base 10.

By using exploding dots to define base 3/2, we can now begin to examine how sequences behave in new, less studied environments. Our goal in this paper is to briefly explore the behavior of various sequences in base 3/2. We study in particular sequences which depend on their positional representation, so that a different base causes them to behave in new ways, as we discuss in Section~\ref{sec:sequences}.

We notice that powers of 2 and 3 exhibit awesome properties in base 3/2. Integer $3^n$ in base 3/2 is $2^n$ followed by $n$ zeros.

The Look-and-say sequence in base 3/2 begins similarly to base 10, but differs after the first five terms.

Fibonacci numbers are incredibly beautiful, but because they are not dependent on base, their base 3/2 expression might not be particularly  interesting. By tweaking the Fibonacci sequence using sorting of the digits as one of the steps, we are able to create sequences related to base representations that are inspired by the Fibonacci sequence. We study the eventual behavior of such sequences.

\section{Exploding dots and base 3/2}\label{sec:expdots}

Essentially, exploding dots is a machine made up of boxes with rules to describe what happens when you have a certain number of dots in a box \cite{JT}. In base 10, whenever there is a group of 10 dots in one box, they explode into 1 dot in the next box up. Similarly, we could use this to describe binary by having 2 dots explode into 1, and so on for any base. 

For example, to write 11 in base 3, we would first have 11 dots in the rightmost box as in Figure~\ref{fig:11base3step1}.
\begin{figure}[htb]
    \centering
    \scalebox{-1}[1]{ \includegraphics[scale=0.4]{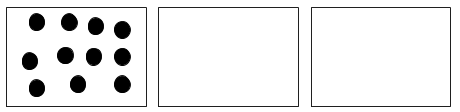}}
    \caption{11 base 3: step 1.}
    \label{fig:11base3step1}
\end{figure}

Then each group of 3 dots in the rightmost box would explode, and one dot per group will appear in the box to the left as in Figure~\ref{fig:11base3step2}.
\begin{figure}[htb]
    \centering
    \scalebox{-1}[1]{ \includegraphics[scale=0.4]{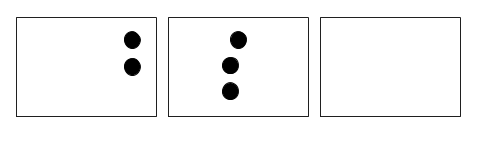}}
    \caption{11 base 3: step 2.}
    \label{fig:11base3step2}
\end{figure}

Finally, the three dots in the second box would explode into 1 dot to its left, as shown in Figure~\ref{fig:11base3step3}. 
\begin{figure}[htb]
    \centering
    \scalebox{-1}[1]{ \includegraphics[scale=0.4]{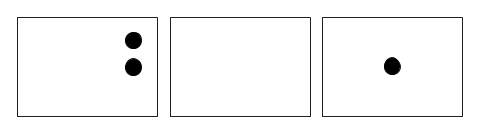}}
    \caption{11 base 3: step 3.}
    \label{fig:11base3step3}
\end{figure}

By reading the number of dots from left to right, 11 in base 3 is 102.

But the interesting thing here is that there is no reason this model should be exclusive to integer bases \cite{JT}. Suppose, instead, our rule is that 3 dots explode into 2 dots in the next box. To represent 11 in this base, we use a similar process, shown in Figure~\ref{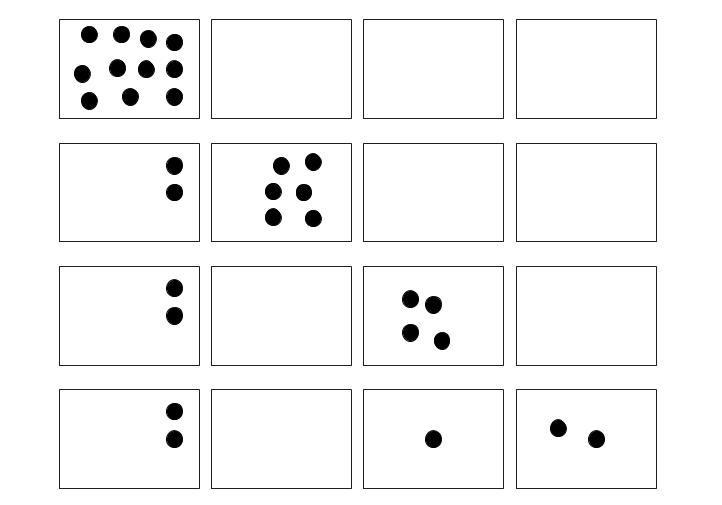}.
\begin{figure}[htb]
    \centering
    \scalebox{-1}[1]{ \includegraphics[scale=0.4]{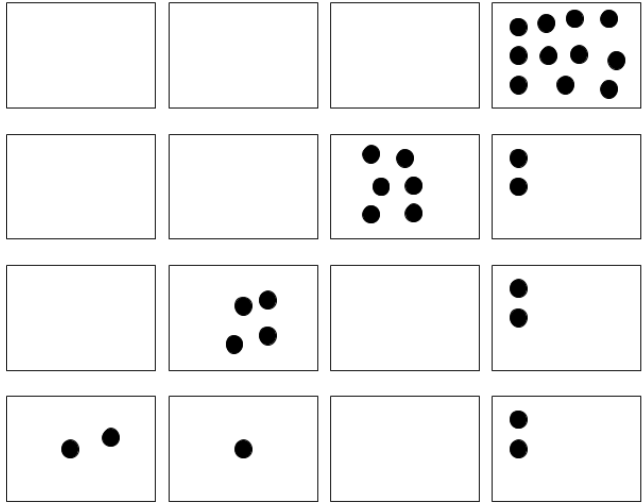}}
    \caption{11 base 3/2.}
    \label{11base3Over2.png}
\end{figure}

Each group of three explodes into two in the next box. Using this system, 11 is 2102.

This representation behaves quite a bit like base 3/2. The rightmost box represents $(\frac{3}{2})^0$, the next $(\frac{3}{2})^1$, then $(\frac{3}{2})^2$, and so on. Number $2\cdot (\frac{3}{2})^3+1 \cdot (\frac{3}{2})^2+0 \cdot (\frac{3}{2})^1+2 \cdot(\frac{3}{2})^0$ is indeed equal to 11. We can use this just like any other base to represent numbers.

\section{Patterns in integers written in base 3/2.}\label{sec:patterns}

The first few non-negative integers are expressed in base 3/2 as:
\[0,\ 1,\ 2,\ 20,\ 21,\ 22,\ 210,\ 211,\ 212,\ 2100,\ 2101,\ 2102,\ 2120, \ldots.\]

This is sequence \seqnum{A024629} in the database. We start by studying the patterns at the beginning of the integers.

\subsection{Beginning digits}\label{sec:begin}

Here are several properties of integers written in base 3/2 that are easy to observe \cite{CM}:

\begin{enumerate}
\item From 2 onwards all integers start with 2. 
\item From 6, or 210 in base 3/2, onwards all integers start with 21. 
\item The only integer with more than one digit that has all of the same digit is 5, which is 22 in base 3/2.
\item From 8 onward, the third digit changes between 0 and 2. 
\item No integer other than 7 starts with 211.
\end{enumerate}

These properties are easy to prove. For example, the first property is true because each carry adds 2 \cite{JT}. The second and the fourth properties can be proven with a similar argument. The third/fifth  property follows from the second/fourth property, respectively. 

Notice that a prefix of any integer written in base 3/2 is an even integer. For example, integer 32 in base 3/2 is 212022. After removing the last digit we get 21202, which is 20 in base 3/2.

\begin{lemma}
Removing the last digit of integer $n$ in base 3/2 produces integer $2 \cdot \lfloor \frac{n}{3}\rfloor$.
\end{lemma}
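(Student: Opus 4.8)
The plan is to read everything off the defining identity of a base-3/2 expansion. Write the base-3/2 representation of $n$ as $d_k d_{k-1}\cdots d_1 d_0$ with each $d_i\in\{0,1,2\}$, so that $n=\sum_{i=0}^{k} d_i (3/2)^i$ by definition, and let $P=\sum_{i=1}^{k} d_i (3/2)^{i-1}$ be the number represented by the prefix $d_k\cdots d_1$, i.e.\ the number obtained by removing the last digit. Pulling one factor of $3/2$ out of every term except the last gives the single relation I will exploit:
\[ n = d_0 + \tfrac{3}{2}P. \]

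From here the argument is short. We already know (the observation just before the lemma) that $P$ is a nonnegative integer; concretely, in the exploding-dots picture the rightmost box never receives new dots, so it simply explodes $\lfloor n/3\rfloor$ times, is left with $n \bmod 3$ dots, and deposits $2\lfloor n/3\rfloor$ dots into the second box — and from that point on the boxes to its left evolve exactly as the base-3/2 computation of the integer $2\lfloor n/3\rfloor$, shifted over by one box. Granting that $P$ is an integer, the relation $3P = 2(n-d_0)$ shows $3 \mid n-d_0$, and since $d_0\in\{0,1,2\}$ this forces $d_0 = n \bmod 3$. Substituting back, $P = \tfrac{2}{3}\bigl(n-(n\bmod 3)\bigr) = 2\lfloor n/3\rfloor$, which is the assertion.

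The only delicate point — and the one I expect to be the real obstacle — is the integrality of $P$ (equivalently, that the last digit of $n$ genuinely equals $n \bmod 3$), which rests on the confluence of the exploding-dots process together with the prefix observation quoted above; everything else is the one-line computation with $n = d_0 + \tfrac{3}{2}P$. If a fully self-contained argument is wanted, the same facts can be obtained by induction on the number of digits $k$: the base case $k=0$ is $0 = 2\lfloor n/3\rfloor$ for $n\in\{0,1,2\}$, and the inductive step uses the exploding-dots description above to identify the digits of $n$ after the last one as precisely those of $2\lfloor n/3\rfloor$ (noting $2\lfloor n/3\rfloor < n$ for $n\ge 1$, so the hypothesis applies), after which the displayed computation finishes the proof.
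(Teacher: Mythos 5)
Your proof is correct and, at its core, is the same as the paper's: the paper's entire argument is the exploding-dots observation that the units box explodes $\lfloor n/3\rfloor$ times and deposits $2\lfloor n/3\rfloor$ dots into the next box, which is exactly the step you invoke to justify the integrality of $P$. The surrounding algebra with $n = d_0 + \tfrac{3}{2}P$ is a correct but redundant consistency check, since the dots argument already identifies the value of the prefix outright.
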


\begin{proof}
Exactly $2 \cdot \lfloor \frac{n}{3}\rfloor$ dots move to the left after all explosions in the units digit.
\end{proof}

In the example above $2 \cdot \lfloor \frac{32}{3}\rfloor = 20$.

This observation that, in base 3/2 a proper prefix of an integer is an even integer allows us to arrange even integers in a tree. 

At each vertex we have a digit and an even integer value as a subscript. The tree is pictured in Figure~\ref{fig:tree}. Even integers are written from top to bottom left to right. If a vertex is marked with integer $x$, then we concatenate the digits on the path from the root to $x$ to get the representation of $x$ in base 3/2.

The tree is built recursively starting with integer 2, which is 2 in base 3/2. The recursive rule is: If the integer at the vertex $x$ divided by 2 is odd, then exactly one edge goes out, and the resulting node is labeled 1. The corresponding integer is $3x/2+1$. Otherwise, if the half is even, then we draw two edges and add 0 for $3x/2$ and add 2  for $3x/2+2$ to them. Note that the nodes branch alternatively corresponding to their integer values.

\begin{figure}[h]\label{fig:tree}
\centering
\scalebox{1.3}{
\begin{forest}
[$2_{2}$[$1_{4}$[$0_{6}$[$1_{10}$[$1_{16}$[$0_{24}$[$0_{36}$][$2_{38}$]][$2_{26}$[$1_{40}$]]]]][$2_{8}$[$0_{12}$[$0_{18}$[$1_{28}$[$0_{42}$][$2_{44}$]]][$2_{20}$[$0_{30}$[$1_{46}$]][$2_{32}$[$0_{48}$][$2_{50}$]]]][$2_{14}$[$1_{22}$[$1_{34}$[$1_{52}$]]]]]]]
\end{forest}
}
\caption{The tree of even integers}
\end{figure}
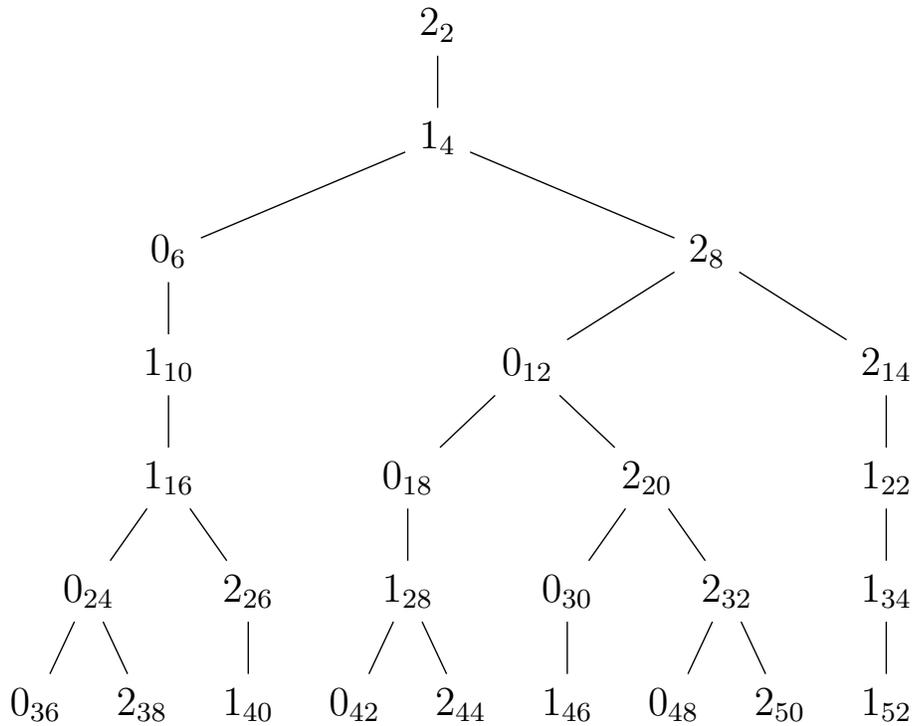

Note that the tree is mentioned in \seqnum{A005428} in a different context. The sequence \seqnum{A005428} is described as the number of nodes at level $n$ of a planted binary tree with alternating branching and non-branching nodes. As we saw above our tree branches out when the corresponding even number is divisible by 4. As even numbers divisible by 4 alternate with even numbers that are not divisible by 4, our tree is the same tree as in the sequence. James Propp and Glen Whitney directed us to the tree \cite{JP, GW}.

Each vertex on level $k$ of the tree, counting from top to bottom, corresponds to an integer that has $k$ digits in its base 3/2 representation. See sequence \seqnum{A246435} that represents the number of digits of integers in base 3/2.

The number of nodes at distance $k-1$ from the root is the number of even numbers with $k$ digits. The corresponding sequence is:
\[1,\ 1,\ 2,\ 3,\ 4,\ 6,\ 9,\ 14,\ 21,\ 31,\ 47,\ 70, \ldots.\]

This is sequence \seqnum{A005428}: $a(n) = \lceil 1 + \textrm{sum of preceding terms} /2\rceil$. It is also a shifted sequence \seqnum{A073941}.

The number of all integers and the number of even integers of a given length in base 3/2 are related in the following manner.

\begin{lemma}
Given a prefix $p$, thrice the number of even integers of length $k$ with prefix $p$ is the number of all integers of length $k+1$ with prefix $p$.
\end{lemma}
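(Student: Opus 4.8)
The plan is to use the previous lemma, which tells us that removing the last digit of an integer $n$ (written in base $3/2$) yields the integer $2\lfloor n/3\rfloor$. This gives a map from integers of length $k+1$ to even integers of length $k$, obtained by deleting the final digit; I would first observe that this map preserves the prefix $p$, since deleting the last digit does not touch the first few digits. So it suffices to show this map is exactly $3$-to-$1$.

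\medskip

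\noindent\textbf{Key steps.} First I would fix an even integer $m$ of length $k$ with prefix $p$ and ask which integers $n$ of length $k+1$ map to it, i.e.\ which $n$ satisfy $2\lfloor n/3\rfloor = m$. Since $m$ is even, write $m = 2t$; then $\lfloor n/3\rfloor = t$, so $n \in \{3t,\ 3t+1,\ 3t+2\}$ — exactly three candidates. Second, I need to check that all three of these actually have length exactly $k+1$ (not $k$ or $k+2$): this follows because appending any digit $d \in \{0,1,2\}$ to the base-$3/2$ representation of $m$ produces a valid base-$3/2$ string (the leading digit is unchanged so no illegal leading-digit issue arises, and there is no ``carry'' back into earlier positions since we are just writing $m$'s digits then $d$), and by the recursive/exploding-dots description this string represents one of $3t, 3t+1, 3t+2$; more carefully, concatenating $d$ onto $m$'s representation corresponds to starting with the dot-configuration for $m$ shifted one box left and adding $d$ dots in the units box, whose value is $\tfrac32 m + d = 3t + d$. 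Third, I would note the prefix is automatically preserved: each of $3t,3t+1,3t+2$ has representation (that of $m$) followed by one digit, so its length-$k$ prefix equals $m$'s representation, hence contains $p$ as a prefix. Conversely any length-$(k+1)$ integer with prefix $p$ deletes its last digit to an even length-$k$ integer that still has prefix $p$. Summing over all such $m$ gives the factor of $3$.

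\medskip

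\noindent\textbf{Main obstacle.} The routine bijective bookkeeping (three preimages per even integer) is easy; the subtle point is making sure that appending a digit to an even integer's base-$3/2$ representation never changes the length unexpectedly and never forces a carry that alters earlier digits — in other words, that the digit strings ``representation of $m$, then $d$'' for $d\in\{0,1,2\}$ are precisely the valid base-$3/2$ representations of the three integers $3t,3t+1,3t+2$, and that these are all distinct in length from shorter/longer strings. This is where I would lean on the exploding-dots picture: the units-box explosions that produced $m$'s representation leave at most $2$ dots in the units box, so adding $d\le 2$ more dots keeps that box's count at most $4$; a single explosion there (if the count reaches $3$ or more) pushes $2$ dots left but, because $m$ is even, does not cascade into the prefix region $p$, which by the stability properties (items 1--2 of Section~\ref{sec:begin}) is already ``locked in'' for integers this large. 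So the heart of the argument is a clean statement that the map $n \mapsto 2\lfloor n/3 \rfloor$ restricted to length-$(k+1)$ integers is a well-defined surjection onto length-$k$ even integers with fibers of size exactly $3$, respecting prefixes.
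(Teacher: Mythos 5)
Your proposal is correct and follows essentially the same route as the paper, whose entire proof is the one-line observation that every integer of length $k+1$ is an even integer of length $k$ with one of the digits $0,1,2$ appended; you have simply spelled out the three-to-one correspondence $m = 2t \mapsto \{3t, 3t+1, 3t+2\}$ in detail. (Your worry about a possible explosion in the units box is moot: after shifting $m$'s dot configuration left, the units box holds exactly $d \le 2$ dots, so no carry can occur.)
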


\begin{proof}
Any integer can be written as an even integer with one of the digits 0, 1, and 2 attached at the end.
\end{proof}

It follows that the number of integers with $k+1$ digits in base 3/2 is thrice the number of even integers with $k$ digits. The following sequence, \seqnum{A081848}, describes the total number of integers of length $k$ in base 3/2:
\[\seqnum{A081848}(n) = 3,\ 3,\ 3,\ 6,\ 9,\ 12,\ 18,\ 27,\ 42,\ 63,\ 93,\ 141,\ 210,\ \ldots.\]

\begin{lemma}
\seqnum{A081848}$(n) = 3\lceil \textrm{sum of preceding terms}/2 \rceil - \textrm{sum of preceding terms}$.
\end{lemma}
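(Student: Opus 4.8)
The plan is to deduce the claimed recurrence for $\seqnum{A081848}$ from the recurrence already quoted for $\seqnum{A005428}$, together with the relation between the two sequences established just above. Write $a_n = \seqnum{A081848}(n)$, the number of $n$-digit integers in base $3/2$, and $e_n = \seqnum{A005428}(n)$, the number of nodes at distance $n-1$ from the root of the tree of even integers (for $n \ge 2$ this is the number of even $n$-digit integers in base $3/2$). The two ingredients I will use are: (i) $a_{n+1} = 3e_n$ for all $n \ge 1$ --- this is exactly the statement ``the number of integers with $k+1$ digits is thrice the number of even integers with $k$ digits'' --- which together with $a_1 = 3$ shows that every $a_i$ is divisible by $3$, hence so is every partial sum $S := a_1 + a_2 + \cdots + a_{n-1}$; and (ii) the quoted formula $e_n = \lceil (1 + e_1 + e_2 + \cdots + e_{n-1})/2 \rceil$.

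The first real step is to rewrite the partial sum of the $e$'s in terms of the $a$'s. Using $a_{i+1} = 3e_i$,
\[
e_1 + e_2 + \cdots + e_{n-2} \;=\; \frac{a_2 + a_3 + \cdots + a_{n-1}}{3} \;=\; \frac{S - a_1}{3} \;=\; \frac{S}{3} - 1 .
\]
Since $3 \mid S$, I may write $S = 3t$ with $t$ a non-negative integer, so this partial sum equals $t-1$. Plugging into ingredient (ii) gives $e_{n-1} = \lceil (1 + (t-1))/2 \rceil = \lceil t/2 \rceil$, and therefore, by ingredient (i), $a_n = 3 e_{n-1} = 3 \lceil t/2 \rceil$.

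It remains to identify $3\lceil t/2 \rceil$ with the claimed expression $3 \lceil S/2 \rceil - S = 3\lceil 3t/2 \rceil - 3t$. This is immediate once one pulls the integer $t$ out of the ceiling: $\lceil 3t/2 \rceil - t = \lceil 3t/2 - t \rceil = \lceil t/2 \rceil$, so $3\lceil 3t/2 \rceil - 3t = 3\lceil t/2 \rceil = a_n$. The only points that require care --- the closest thing to an obstacle --- are bookkeeping ones: lining up the index conventions so that $a_n$ matches $e_{n-1}$ and $S$ matches $e_1 + \cdots + e_{n-2}$; treating the base case $n = 2$ separately (there the sum $e_1 + \cdots + e_{n-2}$ is empty, and one simply checks $3\lceil a_1/2 \rceil - a_1 = 3\lceil 3/2\rceil - 3 = 3 = a_2$ by hand); and observing that the final ceiling manipulation fails for a general integer $S$ and succeeds here precisely because $S$ is a multiple of $3$. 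Everything else is a single substitution.
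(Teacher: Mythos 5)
Your proof is correct, but it takes a genuinely different route from the paper's. The paper's argument is a one-line counting observation: the sum $S$ of the preceding terms is the number of non-negative integers with fewer than $k$ digits, hence (since counting starts at $0$) $S$ is itself the smallest $k$-digit integer; the smallest $(k+1)$-digit integer is $3\lceil S/2\rceil$ (append a $0$ if $S$ is even, otherwise first add $1$); and the number of $k$-digit integers is the difference $3\lceil S/2\rceil - S$. You instead derive the identity algebraically from the tripling relation $a_{n+1}=3e_n$ between \seqnum{A081848} and the even-integer counts \seqnum{A005428}, together with the quoted recurrence $e_n=\lceil(1+e_1+\cdots+e_{n-1})/2\rceil$, finishing with the ceiling manipulation $\lceil 3t/2\rceil - t=\lceil t/2\rceil$. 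Both routes lean on a previously quoted but unproved fact --- yours on the OEIS recurrence for \seqnum{A005428}, the paper's on the recursion $S_{k+1}=3\lceil S_k/2\rceil$ for the smallest $k$-digit integer (stated later, citing Tanton) --- so neither is more self-contained; the paper's is shorter and explains \emph{why} the formula holds (a count as a difference of consecutive thresholds), while yours is a careful mechanical verification that the two recurrences on the page are consistent, and is in fact more scrupulous about the off-by-one role of $0$ and about when the ceiling identity is legitimate (it needs $3\mid S$, which you justify).
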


\begin{proof}
Denote the sum of the preceding terms $S$, that is, $S$ is the total number of non-negative integers with less than $k$ digits in base 3/2. Also $S$ is the smallest integer with $k$ digits (remember that we are counting 0). The smallest integer with $k+1$ digits is $3\lceil S/2\rceil$. The total number of $k$-digit base 3/2 integers is the difference: $3\lceil S/2\rceil- S$.
\end{proof}

\subsection{Ending digits}

The beginning strings of integers are relatively sparse. What about the ending strings?

The last $k$ digits are repeated in a cycle which length is a multiple of 3. This is because for every three numbers only the last digit changes. The cycle repeats after the first occurrence of a number with $k$ zeros at the end.

\begin{lemma}
The last $k$ digits of integers in base 3/2 cycle with period $3^k$.
\end{lemma}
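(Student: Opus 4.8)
The plan is to reduce the statement to the previous lemma, which says that removing the last digit of the integer $n$ (written in base $3/2$) produces the integer $2\lfloor n/3\rfloor$. Iterating this, removing the last $k$ digits of $n$ produces an even integer which is a fixed function of $\lfloor n/3^k\rfloor$ — in fact one can check that stripping $k$ digits sends $n$ to $f_k(n)$, where $f_k$ depends only on $\lfloor n/3^k\rfloor$, since each stripping step only ever uses $\lfloor \cdot/3\rfloor$ of the current value and $\lfloor \lfloor n/3\rfloor /3 \rfloor = \lfloor n/9\rfloor$, etc. Consequently the \emph{last} $k$ digits of $n$ are determined by the pair $(n, f_k(n))$, and since $f_k(n)$ only depends on $\lfloor n/3^k\rfloor$, the last $k$ digits of $n$ are determined by $n \bmod 3^k$ together with $\lfloor n/3^k\rfloor$. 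To finish I need to remove the dependence on $\lfloor n/3^k\rfloor$, i.e. show that if $n \equiv n' \pmod{3^k}$ then $n$ and $n'$ have the same last $k$ digits regardless of the quotient; equivalently, that the last $k$ digits of $n$ and of $n + 3^k$ agree.

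First I would set up the key claim precisely: for every $n$ and every $k\ge 1$, the integers $n$ and $n+3^k$ have the same final $k$ digits in base $3/2$. I would prove this by induction on $k$. The base case $k=1$ is immediate: $n$ and $n+3$ differ by exactly one "cycle" of the units digit, so by the previous lemma $\lfloor (n+3)/3\rfloor = \lfloor n/3\rfloor + 1$, hence stripping the last digit of $n+3$ gives $2\lfloor n/3\rfloor + 2$, which is the representation obtained from that of $2\lfloor n/3\rfloor$ by... — here I must be slightly careful — the cleaner statement is simply that among any three consecutive integers only the units digit changes (cycling through $0,1,2$), which is exactly property that "for every three numbers only the last digit changes," already noted in the text, and which itself follows from the previous lemma since $\lfloor n/3\rfloor$ is constant on such a block. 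For the inductive step, write $n = 3q + r$ and $n + 3^k = 3(q + 3^{k-1}) + r$ with $0\le r\le 2$; by the previous lemma the prefixes (all-but-last digit) of $n$ and $n+3^k$ are $2q$ and $2(q+3^{k-1}) = 2q + 2\cdot 3^{k-1}$ respectively, while the last digits agree (both equal $r$, as $r$ is determined by $n\bmod 3$). Now $2q$ and $2q + 2\cdot 3^{k-1}$ — I want their last $k-1$ digits to agree. Since $2\cdot 3^{k-1}$ is $3^{k-1}$ added twice, applying the induction hypothesis twice (with exponent $k-1$) gives that $2q$ and $2q + 2\cdot 3^{k-1}$ have the same final $k-1$ digits. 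Concatenating with the common last digit $r$ gives the claim for $k$.

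Having the claim, the lemma follows quickly: the claim shows the last $k$ digits of $n$ depend only on $n \bmod 3^k$, so the sequence of last-$k$-digit strings is periodic with period dividing $3^k$. For the period to be exactly $3^k$ I would exhibit two integers in a block of length $3^k$ with different last $k$ digits, or more cleanly argue the period $p$ divides $3^k$ and that $p$ must itself be divisible by $3^k$: if $p$ were a proper divisor, then in particular $p \mid 3^{k-1}$ would force the last $k$ digits of $n$ and $n + 3^{k-1}$ to always agree; but taking $n$ to be the smallest integer ending in $k$ zeros (which exists, as noted, since $3^k$ in base $3/2$ — or rather the relevant power-of-$3$ multiple — has a run of zeros), the next such integer with the same last $k$ digits is $3^k$ away, contradicting $p \le 3^{k-1}$. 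I expect the main obstacle to be the bookkeeping in the inductive step — specifically getting the "add $3^{k-1}$ twice" reduction stated so that the induction genuinely closes, and making sure the edge behavior (when $\lfloor n/3\rfloor$ is odd versus even, i.e. the branching vs. non-branching case in the tree) does not break the argument; I believe it does not, because the previous lemma already packages all of that into the clean formula $2\lfloor n/3\rfloor$, so no case analysis on digits is actually needed.
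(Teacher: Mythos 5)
Your argument is correct, but it runs along a genuinely different track from the paper's. The paper's proof is two assertions: (i) $3^k$ ends in $k$ zeros, so adding it to any integer leaves the last $k$ digits unchanged; (ii) if two integers share their last $k$ digits, their difference ends in $k$ zeros and is therefore divisible by $3^k$ (invoking the trailing-zeros/divisibility-by-$3^k$ correspondence), so no shorter period exists. Both assertions lean on unstated facts about how addition and subtraction of base-$3/2$ representations interact with trailing digits. You instead derive the invariance of the last $k$ digits under adding $3^k$ by induction on $k$ from the strip-last-digit lemma ($n \mapsto 2\lfloor n/3\rfloor$), with the nice observation that the factor of $2$ in the prefix is absorbed by applying the inductive hypothesis twice ($2q \to 2q+3^{k-1} \to 2q+2\cdot 3^{k-1}$); and for exactness you note that the integers ending in $k$ zeros are precisely the multiples of $3^k$, which are spaced $3^k$ apart, so the period cannot drop to $3^{k-1}$. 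What your route buys is rigor: the paper's step ``same last $k$ digits implies the difference ends in $k$ zeros'' is itself a small induction that is never carried out, whereas your version needs only the already-proved quotient formula plus the (forward direction of the) divisibility lemma. What it costs is length, and one piece of bookkeeping you should make explicit in a final write-up: for small $n$ the ``last $k$ digits'' must be interpreted with leading-zero padding (or the claim restricted to $n$ with at least $k$ digits), so that the concatenation ``last $k-1$ digits of $2\lfloor n/3\rfloor$ followed by $n \bmod 3$'' is literally the last $k$ digits of $n$.
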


\begin{proof}
The number $3^k$ has $k$ zeros at the end. That means integers $x$ and $x+3^k$ have the same $k$ digits at the end. Now we need to show that there are no smaller cycles. Suppose two integers $x<y$ end with the same $k$ digits. Then $y-x$ ends with $k$ zeros. It follows that $y-x$ is divisible by $3^k$. This means the cycle must be a multiple of $3^k$.
\end{proof}

Interestingly, the endings of integers behave differently from the beginnings: at the end, any combination of the last several digits is possible.

\subsection{The largest and smallest integers with a given number of digits}

We look at the largest and smallest integers with a given number of digits expressed in base 3/2.

The smallest number $S_k$ with $k>0$ digits in base 3/2: 
\[0,\ 20,\ 210,\ 2100,\ 21010,\ 210110,\ 2101100,\ 21011000,\ 210110000, \ldots.\]

This is now sequence \seqnum{A304023}.

The values of these integers in base-10 form sequence \seqnum{A070885}:
\[1,\ 3,\ 6,\ 9,\ 15,\ 24,\ 36,\ 54,\ 81,\ 123,\ 186,\ 279,\ 420,\ 630,\ 945, \ldots.\]

The recursive formula for this sequence is: $a(n+1) = 3a(n)/2$, if $a(n)$ is even, and $a(n+1) = 3a(n+1)/2$ if $a(n)$ is odd \cite{JT}.

Correspondingly, the largest number $L_k$ with $k>0$ digits in base 3/2 is now sequence \seqnum{A304024}: 
\[2,\ 22,\ 212,\ 2122,\ 21222,\ 212212,\ 2122112,\ 21221112,\  \ldots.\]

Writing $L_k$ in base-10, we get a new sequence \seqnum{A304025}:
\[2,\ 5,\ 8,\ 14,\ 23,\ 35,\ 53,\ 80,\ \ldots.\]

These sequences are connected. The smallest number with $k+1$ digits is the largest number with $k$ digits plus 1: $S_{k+1} = L_k + 1$. In base 3/2, the largest number does not have zeros, and the smallest does not have twos except as the first digit. We prove the following necessary and sufficient condition.

\begin{lemma}\label{lemma:SL}
An integer is the largest integer with a given number of digits in base 3/2 if and only if it is represented without zeros and the last digit is 2.  Similarly, an integer is the smallest integer with a given number of digits in base 3/2 if and only if it ends in zero and is represented without twos except for the first digit.
\end{lemma}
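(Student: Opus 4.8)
The plan is to prove, by induction on the number of digits $k\ge 1$, a single statement carrying both directions of the lemma: for every $k$, the largest $k$-digit integer $L_k$ is written without zeros with last digit $2$, \emph{and} it is the only $k$-digit integer of that form; dually, the smallest $k$-digit integer $S_k$ ends in $0$ and has no $2$ except as its first digit, \emph{and} is the only $k$-digit integer of that form. The ``only if'' parts of the lemma are then the first clauses and the ``if'' parts are the uniqueness clauses. The ingredients are: the earlier lemma (the last digit of $n$ is $n\bmod 3$, and deleting it leaves the even integer $2\lfloor n/3\rfloor$), so every integer with at least two digits is $3P/2+d$ for a unique even integer $P$ with one fewer digit and a unique $d\in\{0,1,2\}$, while conversely appending any $d\in\{0,1,2\}$ to a positive even $k$-digit integer gives a $(k+1)$-digit integer (this is the counting lemma above); and the fact that the $k$-digit integers form the contiguous block $[S_k,L_k]$ with $S_{k+1}=L_k+1$.

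A preliminary observation I would record is that $L_k\equiv 2\pmod 3$ for every $k$: if $L_k$ ended in $0$ or $1$, then $L_k+1$ would have the same prefix as $L_k$ and hence only $k$ digits, contradicting $L_k+1=S_{k+1}$; dually $S_k\equiv 0\pmod 3$. For the inductive step, the point is to identify the prefix of $L_{k+1}$. Since the $(k+1)$-digit integers are exactly the numbers $3P/2+d$ with $P$ an even $k$-digit integer, $L_{k+1}=3P/2+2$ where $P$ is the \emph{largest} even $k$-digit integer. Here the parity of $L_k$ intervenes: if $L_k$ is even then $P=L_k$; if $L_k$ is odd then, since $L_k\equiv 2\pmod 3$, the integer $L_k-1$ has the same prefix as $L_k$ but last digit $1$, and $P=L_k-1$. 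By the induction hypothesis the representation of $L_k$ uses only the digits $1$ and $2$ and ends in $2$ (and any prefix is even), so in both cases $P$ uses only $1$s and $2$s; hence $L_{k+1}=3P/2+2$ uses only $1$s and $2$s and ends in $2$. That settles ``only if''.

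For ``if'', let $n$ be any $(k+1)$-digit integer written with only $1$s and $2$s and ending in $2$; its prefix $P'$ is an even $k$-digit integer written with only $1$s and $2$s and ending in some $d\in\{1,2\}$. If $d=2$, the induction hypothesis forces $P'=L_k$, so $L_k$ is even, $P=L_k=P'$, and $n=3P'/2+2=L_{k+1}$. If $d=1$, then $P'\equiv 1\pmod 3$ whereas $L_k\equiv 2\pmod 3$, so $P'<L_k$, hence $P'+1\le L_k$ still has $k$ digits; and $P'+1$ has the same prefix as $P'$ with last digit $2$, so the induction hypothesis gives $P'+1=L_k$, whence $L_k$ is odd, $P=L_k-1=P'$, and again $n=3P'/2+2=L_{k+1}$. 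The base case $k=1$ is immediate: $L_1=2$, and of $0,1,2$ only $2$ is zero-free and ends in $2$. The treatment of $S_k$ is parallel --- ``largest / no zeros / ends in $2$'' becomes ``smallest / no $2$s after the first digit / ends in $0$'', $L_{k+1}=3P/2+2$ becomes $S_{k+1}=3P/2$ with $P$ the smallest positive even $k$-digit integer, and the residues mod $3$ switch --- except that two base cases are needed, $k=1$ (with $S_1=0$) and $k=2$ (with $S_2=3$, that is ``$20$''), since $0$ is not a legal prefix, so the recursion producing $S_{k+1}$ from $S_k$ only begins at $k\ge 2$.

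The step I expect to be the real obstacle is precisely this prefix analysis: $L_k$ is generally \emph{not} the prefix of $L_{k+1}$ --- it is only when $L_k$ happens to be even --- so the induction must track the parity of the extreme integers and use the residues modulo $3$ to check that the ``corrected'' prefix, $L_k$ or $L_k-1$, still has exactly $k$ digits and still contains no zero. Once that is set up, and the small degenerate cases are handled by hand, everything else is a routine verification.
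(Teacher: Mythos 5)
Your proof is correct, but it takes a genuinely different route from the paper's. The paper argues locally: an integer $n$ is the largest with $k$ digits exactly when $n+1$ has $k+1$ digits, and it traces the carry produced by adding $1$ --- a trailing $2$ launches a carry of $2$, every nonzero digit propagates it (both $1+2$ and $2+2$ explode and re-emit a carry of $2$) while a $0$ absorbs it, so the carry spills past the leading digit precisely when the number is zero-free with last digit $2$; the claim about $S_k$ is the mirror-image borrow argument for $n-1$. You instead induct on the number of digits using the prefix structure (every $(k+1)$-digit integer is $3P/2+d$ for a unique even $k$-digit prefix $P$ and unique $d$), strengthen the statement to include uniqueness of the extremal digit pattern, and handle the two delicate points --- that the largest \emph{even} $k$-digit integer is $L_k$ or $L_k-1$ according to the parity of $L_k$, and that this corrected prefix still has $k$ digits and the required digits --- via the residues of $L_k$ and $S_k$ modulo $3$. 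Your version is longer, but it buys something: it replaces the paper's informal multi-digit carry cascade (which silently uses that the final explosion at the leading $2$ creates exactly one new digit) with repeated application of the single-digit append operation already justified by the earlier prefix lemma, it makes the uniqueness half of the equivalence explicit rather than folded into a contrapositive, and it meshes with the tree of even integers, where $L_k$ and $S_k$ sit on the rightmost and leftmost branches. The paper's proof is the shorter and more transparent of the two; yours is the more mechanical and self-contained, and your handling of the extra base case for $S_k$ (since $0$ is not a legal prefix) is a detail the paper does not need to confront but that your method correctly isolates.
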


\begin{proof}
If an integer is expressed without 0s and the last digit is 2, when we add 1 to the number, we will always be carrying over 2 to the next place. Then, we will keep on carrying all the way to the first digit and then carry over to have one more digit. On the other hand, if the last digit is not 2, we can always add 1 without changing the number of digits. If the last digit is 2 and the number has a zero in the middle, then the carry will stop at the first encountered zero and the number of digits will not increase. 

If an integer does not end in 0 or has an extra 2 that is not at the beginning, we can always subtract one from it to get to a smaller integer with the same number of digits. If it ends with 0 and does not have 2s other than the first digit, then subtracting 1 will decrease the number of digits.
\end{proof}

We can get $L_n$ directly from $S_{n+1}$ written in base 3/2: replace 21 at the beginning with 2 and 0 at the end with 2, then shift the rest of the digits up by 1. For example, 210110000 is divided into three groups: 21-011000-0. Replacing the first and the last group and shifting the middle, we get:  2-122111-2. The final result is 21221112.

If we remove the last digit from a $k$-digit smallest/largest integer, we get the smallest/largest even integer with $k-1$ digits. Thus, it is important to also look at even smallest/largest integers.

\subsection{The largest and smallest even integers with a given number of digits}

In this section we only consider even integers. Consider sequences $s_n$ and $l_n$ of the smallest and largest even positive integers with $n$ digits in base 3/2, where $n > 0$: $l_n+2 = s_{n+1}$. Sequence $s_n$ expressed in base 3/2 is now \seqnum{A303500} in the database and it starts as:
\[2,\ 21,\ 210,\ 2101,\ 21011,\ 210110,\ 2101100,\ 21011000,\ 210110001,\ \ldots.\]
Similarly, $l_n$ in base 3/2 is now \seqnum{A304272}:
\[2,\ 21,\ 212,\ 2122,\ 21221,\ 212211,\ 2122111,\ 21221112, 212211122, \ldots.\]

\begin{lemma}
In base 3/2, if we remove the last digit from integers $s_n$ and $l_n$, we get $s_{n-1}$ and $l_{n-1}$ correspondingly.
\end{lemma}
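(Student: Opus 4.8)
The plan is to leverage the earlier Lemma stating that removing the last digit of an integer $n$ written in base $3/2$ produces the integer $2\lfloor n/3 \rfloor$. The strategy is to show that if $x$ is the smallest even integer with $n$ digits, then $2\lfloor x/3 \rfloor$ is the smallest even integer with $n-1$ digits, and similarly for the largest; since $s_n$ and $l_n$ are even, the claim then follows directly by reading off the base $3/2$ representations. So the proof reduces to two monotonicity-type facts about the map $n \mapsto 2\lfloor n/3\rfloor$ restricted to even integers.

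First I would record that the map $\varphi(n) = 2\lfloor n/3 \rfloor$ sends every $n$-digit integer in base $3/2$ to an $(n-1)$-digit even integer (this is exactly the prefix observation already made in the excerpt: a proper prefix of an integer is an even integer, and removing one digit drops the digit count by exactly one). Next I would observe that $\varphi$ is weakly monotincreasing: if $a \le b$ then $2\lfloor a/3\rfloor \le 2\lfloor b/3 \rfloor$. Combining these, for any even $(n-1)$-digit integer $y$, the three integers whose base $3/2$ representation is $y$ with a digit $0$, $1$, or $2$ appended are exactly the $n$-digit integers mapping to $y$ under $\varphi$ (using the earlier Lemma that says every integer is an even integer with one of $0,1,2$ appended). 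Hence as $y$ ranges over even $(n-1)$-digit integers, the preimages partition the $n$-digit integers into consecutive blocks of three, ordered the same way as the $y$'s. Therefore the smallest $n$-digit integer maps to the smallest even $(n-1)$-digit integer, and the largest $n$-digit integer maps to the largest even $(n-1)$-digit integer.

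To finish I would intersect with parity. The smallest even $n$-digit integer $s_n$ is either the smallest $n$-digit integer $S_n$ or $S_n+1$ (whichever is even); in either case, by Lemma~\ref{lemma:SL} and the structure of the representations, appending the last digit to the smallest even $(n-1)$-digit integer $s_{n-1}$ and choosing the smallest legal completing digit that yields an even integer gives exactly $s_n$ — equivalently $\varphi(s_n) = s_{n-1}$ because $s_n$ lies in the block of three integers sitting above $s_{n-1}$, and $s_n$ being the smallest \emph{even} one in the whole $n$-digit range forces it into the block above the smallest even $(n-1)$-digit integer. The same argument with "largest" and the top block handles $l_n$. Since both $s_n$ and $l_n$ are even, removing their last base $3/2$ digit literally computes $\varphi$ applied to them, giving $s_{n-1}$ and $l_{n-1}$ respectively.

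The main obstacle is the parity bookkeeping in the last step: a priori the block of three $n$-digit integers above a fixed even $(n-1)$-digit integer $y$ contains integers of mixed parity (two of one parity, one of the other, depending on the parity of the value of $y\cdot(3/2)$), so one must check carefully that the \emph{smallest even} $n$-digit integer really does sit above the \emph{smallest even} $(n-1)$-digit integer, and not, say, above the second-smallest because the smallest block happened to contain only odd completions in the relevant positions. I expect this to come down to the observation that each block of three consecutive integers contains at least one even integer (in fact either one or two), so the smallest even $n$-digit integer cannot "skip" the first block; combined with monotonicity of $\varphi$ this closes the gap. The largest case is symmetric. I would also double-check the degenerate small-$n$ base cases ($n=1,2$) by hand against the explicit lists given for \seqnum{A303500} and \seqnum{A304272}.
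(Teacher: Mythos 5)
Your proof is correct, but it is a genuinely different (and much more detailed) argument than the one in the paper. The paper disposes of this lemma in one line by appealing to the tree of even integers built earlier: the smallest/largest even integers with $k$ digits are by construction the leftmost/rightmost nodes at level $k$, and edges of the tree are exactly ``append one digit,'' so the parent of the leftmost/rightmost node at level $n$ is the leftmost/rightmost node at level $n-1$. Your argument instead works directly with the digit-removal map $\varphi(m)=2\lfloor m/3\rfloor$, shows the $n$-digit integers are partitioned into ordered blocks of three consecutive integers $\tfrac32 v,\ \tfrac32 v+1,\ \tfrac32 v+2$ sitting above the even $(n-1)$-digit integers $v$, and then handles the parity issue by noting that among $S_n$ and $S_n+1$ (respectively $L_n-1$ and $L_n$) one is even, so the smallest (largest) even $n$-digit integer cannot escape the first (last) block. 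That parity step is exactly the point the paper's one-liner glosses over, and you identified and closed it correctly; in effect your proof reconstructs, from scratch, the structural fact that makes the tree argument work (left-to-right order at each level of the tree agrees with numerical order). The paper's route is shorter because the tree has already been set up; yours is self-contained and would survive even if the tree section were removed.
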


\begin{proof}
The smallest/largest integers form the leftmost/rightmost paths of the tree.
\end{proof}

Even integers are connected to all integers: either $s_n = S_n$ or $s_n = S_n+1$. Similarly, either $l_n = L_n$ or $l_n = L_n-1$. In any case, the following corollary follows from Lemma~\ref{lemma:SL}.

\begin{corollary}
The integers $l_n$ do not contain zeros when written in base 3/2. The integers $s_n$ do not contain twos except as the first digit when written in base 3/2.
\end{corollary}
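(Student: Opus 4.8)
The plan is to derive the corollary directly from Lemma~\ref{lemma:SL} together with the two facts already established just above the statement: that either $s_n = S_n$ or $s_n = S_n + 1$, and that either $l_n = L_n$ or $l_n = L_n - 1$. So the proof is essentially a short case analysis, and the only real content is checking that the small perturbation by $\pm 1$ cannot introduce a forbidden digit (a zero in $l_n$, or a non-leading two in $s_n$).

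First I would handle $l_n$. By Lemma~\ref{lemma:SL}, $L_n$ has no zeros and ends in $2$, so in particular its last digit is $2$. If $l_n = L_n$ we are done immediately. If $l_n = L_n - 1$, then since $L_n$ ends in $2$, subtracting $1$ changes only the last digit, from $2$ to $1$, leaving all other digits untouched; the result still has no zeros. So in both cases $l_n$ is zero-free. Next I would handle $s_n$. By Lemma~\ref{lemma:SL}, $S_n$ ends in $0$ and has no twos except possibly the leading digit. If $s_n = S_n$ we are done. If $s_n = S_n + 1$, then since $S_n$ ends in $0$, adding $1$ changes only the last digit, from $0$ to $1$ (no carrying, because $0 + 1 = 1 < 3$), leaving all other digits untouched; the result still has no twos except possibly the leading digit. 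So in both cases $s_n$ has no twos except as the first digit.

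There is essentially no obstacle here; the one point that deserves a sentence of justification is why the $\pm 1$ adjustment touches only the final digit — namely that the final digit of $L_n$ is $2$ (so $2 - 1 = 1$ needs no borrow) and the final digit of $S_n$ is $0$ (so $0 + 1 = 1$ needs no carry), both of which are part of the conclusion of Lemma~\ref{lemma:SL}. One could also remark that the dichotomies $s_n \in \{S_n, S_n+1\}$ and $l_n \in \{L_n, L_n-1\}$ are themselves immediate, since the smallest (resp.\ largest) $n$-digit integer is within one of the smallest (resp.\ largest) \emph{even} $n$-digit integer simply because consecutive integers alternate in parity. I would write the whole argument as a single compact paragraph, since it is just these two cases.

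\begin{proof}
Recall that either $l_n = L_n$ or $l_n = L_n - 1$. By Lemma~\ref{lemma:SL}, $L_n$ has no zeros and its last digit is $2$. If $l_n = L_n$, then $l_n$ has no zeros. If $l_n = L_n - 1$, then subtracting $1$ from $L_n$ only changes its last digit from $2$ to $1$ (no borrowing is needed), so again $l_n$ has no zeros. Similarly, either $s_n = S_n$ or $s_n = S_n + 1$, and by Lemma~\ref{lemma:SL}, $S_n$ ends in $0$ and has no twos except for the first digit. If $s_n = S_n$, this property holds for $s_n$. If $s_n = S_n + 1$, then adding $1$ to $S_n$ only changes its last digit from $0$ to $1$ (no carrying occurs), so $s_n$ still has no twos except for the first digit.
\end{proof}
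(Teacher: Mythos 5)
Your proof is correct and takes essentially the same route as the paper, which derives the corollary from Lemma~\ref{lemma:SL} together with the dichotomies $s_n \in \{S_n, S_n+1\}$ and $l_n \in \{L_n, L_n-1\}$ stated just before it. The only thing you add is the explicit check that the $\pm 1$ adjustment changes just the final digit (a $2$ becomes a $1$, resp.\ a $0$ becomes a $1$, with no carry or borrow since the residue mod $3$ stays below $3$), a detail the paper leaves implicit.
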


Given that the smallest/largest even integer when written in base 3/2 with a given number of digits is a substring of the next one, we can create an infinite string representing all of the smallest/largest integers. We call this infinite string
\[2101100011010011010100110100101000\ldots.\]
corresponding to the smallest integers the \textit{ultimate smallest even integer}. Its digits are now sequence \seqnum{A304273}. We call this sequence of digits the \textit{evenberry} sequence.

Similarly, the \textit{evenmelon} sequence is the sequence of digits of the \textit{ultimate largest even integer} 
\[212211122121122121211221211212112\ldots\]
and is now sequence \seqnum{A304274}:

We can get the ultimate smallest even integer from the ultimate largest even integer by adding 2. That means, by replacing the first 2 with 21 and shifting all other digits down by 1.

The integer value of sequence $s_n$ in base-10 is now sequence \seqnum{A305498}:
\[2,\ 4,\ 6,\ 10,\ 16,\ 24,\ 36,\ 54,\ 82,\ \ldots.\]

The integer $\frac{3}{2}s_n$ is written in base 3/2 as $s_n$ in base 3/2 with zero at the end. Therefore, $\frac{3}{2}s_n$ is the smallest integer with $n+1$ digits: $\frac{3}{2}s_n = S_{n+1}$. The latter might not be even. Therefore, we can say that $s_{n+1} = \frac{3}{2}s_n$, if $s_n$ is divisible by 4, and that $s_{n+1} = \frac{3}{2}s_n + 1$ otherwise. Combining this together we get:
\[s_{n+1} = 2 \left \lceil\frac{3}{4}s_{n} \right \rceil. \]
Sequence $s_n$ is twice sequence \seqnum{A061419} in OEIS which is defined as: $a(n) = \lceil a(n-1)3/2\rceil$ with $a(1) = 1$.

Similarly, we can generate a recursive formula for the value of largest even number with $n$ digits in base 3/2, which is now sequence \seqnum{A305497}:
\[2,\ 4,\ 8,\ 14,\ 22,\ 34,\ 52,\ 80,\ \ldots.\]
The number $3/2\cdot l_n$ is an integer, and it is written in base 3/2 as $l_n$ with a zero at the end. That means the largest integer with $n+1$ digits is $3/2\cdot l_n+2$. This number might not be even. If it is odd, we need to subtract 1. Therefore, the largest even number is $2\cdot \lfloor (3/2\cdot l_n+2)/2\rfloor$. In other words:
\[l_{n+1} = 2 \left \lfloor\frac{3}{4}l_{n} \right \rfloor +2. \]

Sequence $l_n$ is twice sequence \seqnum{A006999} as we prove later. The description of sequence \seqnum{A006999} is the following: Partitioning integers to avoid arithmetic progressions of length 3. Given this does not provide enough detail, we provide a more detailed description following \cite{GPS}. Keep in mind that definition of \seqnum{A006999} has nothing to do with base 3/2.

Consider a greedy partition of non-negative integers into subsequences not containing a 3-term arithmetic progression. 

For example, the starting sequence $T_0$ is sequence \seqnum{A005836}: 0, 1, 3, 4, 9, 10, 12, and so on. This is the lexicographically earliest increasing sequence of nonnegative integers that contains no arithmetic progression of length 3. It is also the sequence of integers whose base 3 representation contains no 2.

We take the leftover numbers and build out of them the lexicographically earliest increasing sequence $T_1$ of nonnegative integers that contains no arithmetic progression of length 3: 2, 5, 6, 11, 14, 15, and so on. We take the leftover numbers and continue building $T_2$: 7, 8, 16, 17, 19, 20, and so on.

We define the characteristic sequence $a(n)$, so that $a(n) = k$  when $n \in T_k$. That is:
\[a(n) = 0,\ 0,\ 1,\ 0,\ 0,\ 1,\ 1,\ 2,\ 2,\ 0,\ 0, \ldots.\]

The latter is sequence \seqnum{A006997}. The database provides a formula for \seqnum{A006997}: $a(3n+k) = [ (3a(n)+k)/2 ]$, where $0 \leq k \leq 2$.

Sequence \seqnum{A006999} is defined through sequence \seqnum{A006997}: \seqnum{A006999}$(n) = \seqnum{A006997}(3^n-1)$. It is known that \seqnum{A006999}$(n)$ is the largest of the first $3^n$ terms of $a(n)$ \cite{GPS}. That means \seqnum{A006999}$(n)$ counts the number of sequences that appear in the greedy partition up to when integer $3^n$ is reached.

\begin{lemma}
Sequence $l_n$ is twice sequence \seqnum{A006999}.
\end{lemma}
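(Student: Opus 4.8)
The plan is to prove the equivalent statement $l_n = 2\,\seqnum{A006997}(3^n-1)$ for all $n\ge 1$ by induction on $n$, using only the recursion for $l_n$ derived just above the lemma together with the recursion defining \seqnum{A006997}. Write $a=\seqnum{A006997}$, so that by definition $a(3m+k)=\lfloor(3a(m)+k)/2\rfloor$ for $0\le k\le 2$ and $a(0)=0$, and recall $\seqnum{A006999}(n)=a(3^n-1)$. On the base-$3/2$ side we may assume the facts established in the excerpt: $l_1=2$ (the integer $2$ is the largest even $1$-digit number) and $l_{n+1}=2\big\lfloor\tfrac34 l_n\big\rfloor+2$.

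The key observation is the arithmetic identity $3^{n+1}-1 = 3(3^n-1)+2$, which exhibits $3^{n+1}-1$ in the form $3m+k$ with $m=3^n-1$ and $k=2$. Feeding this into the \seqnum{A006997} recursion gives
\[
a(3^{n+1}-1)=\left\lfloor\frac{3\,a(3^n-1)+2}{2}\right\rfloor=\left\lfloor\frac{3\,a(3^n-1)}{2}\right\rfloor+1,
\]
the last equality because adding $2$ to the numerator of a half-integer quotient increases the floor by exactly $1$. Setting $b_n:=a(3^n-1)=\seqnum{A006999}(n)$, this is the clean one-step recursion $b_{n+1}=\lfloor\tfrac{3b_n}{2}\rfloor+1$.

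Then I would verify that the sequence $2b_n$ satisfies the very same recursion as $l_n$: since $\lfloor\tfrac{3\cdot 2b_n}{4}\rfloor=\lfloor\tfrac{6b_n}{4}\rfloor=\lfloor\tfrac{3b_n}{2}\rfloor$, we get $2\big\lfloor\tfrac34(2b_n)\big\rfloor+2 = 2\lfloor\tfrac{3b_n}{2}\rfloor+2 = 2b_{n+1}$. For the base case, $l_1=2$ and $2b_1=2a(3-1)=2a(2)=2\cdot 1=2$ from the listed values of \seqnum{A006997}. Since $l_n$ and $2b_n$ agree at $n=1$ and obey the same recursion, induction gives $l_n=2b_n=2\,\seqnum{A006999}(n)$ for every $n\ge 1$, which is the claim.

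The content of the argument is entirely carried by the identity $3^{n+1}-1=3(3^n-1)+2$, which is precisely what makes the self-referential \seqnum{A006997} recursion collapse onto the $l_n$ recursion; there is no substantive obstacle. The only things needing care are bookkeeping items: aligning the index conventions (the sequence $l_n$ is defined for $n>0$, while \seqnum{A006999} is indexed from $n=0$, so the identification is $l_n=2\,\seqnum{A006999}(n)$ for $n\ge 1$) and checking the two elementary floor manipulations $\lfloor(3m+2)/2\rfloor=\lfloor 3m/2\rfloor+1$ and $\lfloor 6m/4\rfloor=\lfloor 3m/2\rfloor$ for integers $m$. One could instead attempt a structural proof, matching the leftmost/rightmost paths of the tree of even integers against the greedy partition avoiding $3$-term progressions, but the recursion comparison above is much shorter.
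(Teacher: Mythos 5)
Your proof is correct and follows essentially the same route as the paper's: both exploit the identity $3^{n}-1=3(3^{n-1}-1)+2$ to turn the \seqnum{A006997} recursion into the one-step recursion $\seqnum{A006999}(n)=\lfloor 3\,\seqnum{A006999}(n-1)/2\rfloor+1$, then observe that doubling gives exactly the recursion $l_{n+1}=2\lfloor\tfrac34 l_n\rfloor+2$ and match initial terms. Your write-up is a slightly more careful version of the same argument, making the floor manipulations and the base case explicit.
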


\begin{proof}
The database provides a formula for \seqnum{A006997}: $a(3n+k) = \lfloor (3a(n)+k)/2 \rfloor$, where $0 \leq k \leq 2$. We can use this formula to make a recursive formula for \seqnum{A006999}$(n) = \seqnum{A006997}(3^n-1)$. First, we rewrite $\seqnum{A006997}(3^n-1)= \seqnum{A006997}(3^n-3+2) = \seqnum{A006997}(3(3^{n-1}-1)+2)$. Next, by the formula, $\seqnum{A006997}(3^n-1) = \lfloor (3\seqnum{A006997}(3^{n-1}-1)+2)/2 \rfloor$. Therefore,
\[\seqnum{A006999}(n) =  \lfloor (3\seqnum{A006999}(n-1)+2)/2 \rfloor = \lfloor 3\cdot 2\seqnum{A006999}(n-1)/4 \rfloor + 1.\]

If we denote $2\seqnum{A006999}(n)$ as $b(n)$, we get 
\[b(n) = 2\lfloor 3\cdot b(n-1)/4 \rfloor + 2,\]
which is the same recursion as the one for sequence $l_n$. After checking the initial term, we confirm that $l_n = 2\seqnum{A006999}(n)$.
\end{proof}

\section{Divisibility}\label{sec:divisibility}

We start with divisibility by powers of 3, see also \cite{JT}.

\begin{lemma}\label{lemma:div3}
An integer in base 3/2 has $k$ zeros at the end if and only if it is divisible by $3^k$.
\end{lemma}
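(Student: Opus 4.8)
The plan is to induct on $k$, using the ``exploding dots'' description of base $3/2$ together with the earlier Lemma that removing the last digit of $n$ produces $2\lfloor n/3\rfloor$. The base case $k=0$ is vacuous. For the inductive step it suffices to prove the single-digit claim: an integer $n$ in base $3/2$ ends in a zero if and only if $3\mid n$; once we have this, we apply the earlier Lemma to peel off that last zero, obtaining the integer $m = 2\lfloor n/3\rfloor = 2n/3$, and observe that $n$ has $k$ trailing zeros iff $m$ has $k-1$ trailing zeros, while $3^k \mid n$ iff $3^{k-1}\mid n/3$. The only mild subtlety is the factor of $2$: we need $3^{k-1}\mid m = 2n/3$ to be equivalent to $3^{k-1}\mid n/3$, which holds because $\gcd(2,3)=1$.

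First I would establish the single-digit statement directly from the exploding-dots construction. To write $n$ in base $3/2$ we start with $n$ dots in the units box and repeatedly replace groups of $3$ dots by $2$ dots in the next box up; the process in the units box terminates leaving $n - 3\lfloor n/3 \rfloor = n \bmod 3$ dots there, which is exactly the units digit. (No later explosion can ever return dots to the units box, since explosions only move dots leftward.) Hence the units digit is $0$ precisely when $n\equiv 0 \pmod 3$. This also re-derives, and is consistent with, the earlier Lemma that the remaining boxes collectively hold $2\lfloor n/3\rfloor$ dots and so represent the integer $2\lfloor n/3\rfloor$.

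Then I would assemble the induction. Suppose the claim holds for $k-1$. If $n$ is divisible by $3^k$ then in particular $3 \mid n$, so $n$ ends in $0$, and $n/3$ is divisible by $3^{k-1}$; since $\gcd(2,3)=1$, also $2n/3$ is divisible by $3^{k-1}$, and $2n/3$ is precisely the integer obtained by deleting the last (zero) digit of $n$. By the inductive hypothesis $2n/3$ ends in $k-1$ zeros, so $n$ ends in $k$ zeros. Conversely, if $n$ ends in $k\ge 1$ zeros, then its last digit is $0$, so $3\mid n$ and $2n/3$ is the integer with the last digit removed, which ends in $k-1$ zeros; by induction $3^{k-1}\mid 2n/3$, and again coprimality of $2$ and $3$ gives $3^{k-1}\mid n/3$, i.e.\ $3^k\mid n$.

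I do not expect a serious obstacle here; the content is entirely in the exploding-dots observation about the units box, and the rest is bookkeeping. The one point worth stating carefully is the coprimality argument that lets the stray factor of $2$ pass through the divisibility by powers of $3$, since that is where a careless induction could slip.
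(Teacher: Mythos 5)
Your proof is correct and follows essentially the same route as the paper: both arguments rest on the exploding-dots observation that the units box retains $n \bmod 3$ dots while $2\lfloor n/3\rfloor$ dots move left, iterated across the trailing digits. Your version is simply more careful, making the induction explicit and noting that the stray factor of $2$ passes through divisibility by powers of $3$ because $\gcd(2,3)=1$, a point the paper's terser proof leaves implicit.
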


\begin{proof}
Suppose 3-adic value of an integer $n$ is $k$. If we put $n$ dots in the units place, then, after exploding, we get 0 dots in the units place and $2n/3$ dots in the next place to the left. Continuing, we get exactly $k$ zeros at the end of the number $n$ in base 3/2.
\end{proof}

There is also a simple rule for divisibility by 5. It is similar to divisibility by 11 in base 10.

\begin{lemma}
The alternating sum of the digits of an integer in base 3/2, read from right to left, has the same remainder modulo 5 as the integer itself.
\end{lemma}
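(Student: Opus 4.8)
The plan is to exploit the fact that $\frac{3}{2} \equiv -2 \pmod{5}$, so that the place value $\left(\frac{3}{2}\right)^i$ behaves like $(-2)^i$, and $(-2)^i \equiv (-1)^i 2^i \pmod 5$. Since $2^i$ cycles through $1, 2, 4, 3$ modulo $5$ this is not yet the clean alternating sum we want, so the real point is that we should work not with $\left(\frac{3}{2}\right)^i$ directly but with the integer weights that the exploding-dots process actually produces. First I would set up the arithmetic carefully: if $n$ has base $3/2$ digits $d_{m}d_{m-1}\cdots d_1 d_0$ (so $d_0$ is the units digit), then $n = \sum_{i=0}^{m} d_i w_i$ where $w_i$ is the integer value of a single dot in box $i$. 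We have $w_0 = 1$ and, because three dots in box $i$ explode into two dots in box $i+1$, the relation $3 w_{i+1} = \cdot$ — more precisely one dot in box $i+1$ is worth $\frac{3}{2}$ dots in box $i$, so $w_i = \left(\frac{3}{2}\right)^i$; these are not integers, but $n$ itself is, and I would instead track the quantity modulo $5$ via a cleaner route.

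The cleaner route: I would prove the claim by induction on $n$ using the digit-removal lemma already established, namely that deleting the last digit of $n$ in base $3/2$ yields $2\lfloor n/3\rfloor$. Write $n = 3q + r$ with $r = d_0 \in \{0,1,2\}$; then the number obtained by stripping $d_0$ is $2q$, and its base $3/2$ digits are $d_m \cdots d_1$. Let $A(n)$ denote the alternating sum of the digits of $n$ read from right to left, i.e. $A(n) = d_0 - d_1 + d_2 - \cdots$. The key recursion I want is
\[
A(n) = d_0 - A(2q),
\]
which holds because the digit string of $2q$ is exactly $d_m\cdots d_1$, and reading it from the right reverses the sign pattern relative to how those same digits sit inside $n$. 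So it suffices to show $d_0 - A(2q) \equiv n \pmod 5$, i.e. (using $n = 3q+r = 3q + d_0$) that $A(2q) \equiv -3q \equiv 2q \pmod 5$. By the inductive hypothesis $A(m) \equiv m \pmod 5$ for all $m < n$, so $A(2q) \equiv 2q \pmod 5$ as needed — provided $2q < n$, which holds whenever $n \geq 1$ (since $2q \le \frac{2}{3}n < n$). The base case $n = 0$ is trivial.

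The main obstacle is getting the sign bookkeeping in the recursion $A(n) = d_0 - A(2q)$ exactly right: one must be careful that "read from right to left" is applied consistently, and that the digit string of $2q$ really is the prefix $d_m \cdots d_1$ of the digit string of $n$ with no extra leading digit appearing or disappearing (this is exactly the content of the digit-removal lemma, so it is safe, but the indexing needs care, especially around whether $2q$ can have a shorter representation, which only shrinks the alternating sum by dropping zero digits and so is harmless). Once that recursion is pinned down, the induction closes immediately. An alternative non-inductive write-up would observe directly that $w_i \equiv (-1)^i \pmod 5$ in the sense that $2^i w_i$ is an integer congruent to $2^i(-1)^i \cdot$ — but I expect the inductive argument via the already-proved digit-removal lemma to be the shortest and least error-prone, so that is the one I would present.
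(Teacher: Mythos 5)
Your inductive argument is correct, but it takes a genuinely different (and longer) route than the paper, and the reason you were pushed onto it is an arithmetic slip in your first paragraph: you compute $\tfrac{3}{2} \equiv -2 \pmod 5$, but since $2^{-1} \equiv 3 \pmod 5$ we have $\tfrac{3}{2} \equiv 3\cdot 3 = 9 \equiv -1 \pmod 5$. (You may have been conflating this with $3 \equiv -2$, which is true; dividing by $2$ gives $\tfrac32 \equiv -1$.) With the correct value, the ``messy'' direct approach you abandoned collapses to the paper's entire proof: $n = \sum_i d_i \left(\tfrac{3}{2}\right)^i \equiv \sum_i d_i(-1)^i \pmod 5$, made rigorous if you like by multiplying through by $2^m$ to clear denominators, since $2$ is invertible mod $5$. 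That said, your fallback induction is sound: the recursion $A(n) = d_0 - A(2q)$ with $q = \lfloor n/3\rfloor$ follows correctly from the paper's digit-removal lemma (stripping the last digit of $n$ yields $2\lfloor n/3\rfloor$, and reading the remaining digits from the right flips every sign), and the verification $d_0 - 2q \equiv d_0 + 3q = n \pmod 5$ together with $2q < n$ closes the induction. What your approach buys is independence from any discussion of what $\left(\tfrac32\right)^i$ means modulo $5$, at the cost of re-deriving, step by step, exactly the fact that each left-shift multiplies the value by something congruent to $-1$; the paper's one-liner makes that mechanism explicit.
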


\begin{proof}
$3/2 \equiv -1 \pmod{5}.$
\end{proof}

\section{Sequences}\label{sec:sequences}

Here we discuss some sequences that we studied.

\subsection{Powers}

We start with powers of 3 written in base 3/2, which is now sequence \seqnum{A305658}:
\[1, 20, 2100, 212000, 210110000, 21202200000, 21200101000000, \ldots.\]

It follows from Lemma~\ref{lemma:div3} that $a(n) = 3^n$ in the above sequence of powers of 3 has $n$ zeros at the end.

The following is the sequence of powers of 2. Notice how it is similar to powers of 3:
\[ 1, 2, 21, 212, 21011, 212022, 21200101, 2101100202, 21202202121,  \ldots.\]
This is now sequence \seqnum{A305659}.
\begin{lemma}
An integer $3^n$ expressed in base $\frac{3}{2}$ is equal to $2^n$ in the same base with $n$ zeros appended at the end.
\end{lemma}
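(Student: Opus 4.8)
The plan is to interpret multiplication by $3$ in base $3/2$ directly through the exploding-dots machine, and to show that appending a zero to a base-$3/2$ representation corresponds exactly to multiplying the integer value by $3/2$ (equivalently, Lemma~\ref{lemma:div3} already tells us that a string with $n$ trailing zeros represents an integer divisible by $3^n$). Concretely, if an integer $m$ has base-$3/2$ representation $d_{r}d_{r-1}\cdots d_0$, then $m = \sum_{i=0}^{r} d_i (3/2)^i$, and the string $d_{r}\cdots d_0 0$ has value $\sum_{i=0}^{r} d_i (3/2)^{i+1} = \tfrac{3}{2}m$. So ``append $k$ zeros'' is the same as ``multiply by $(3/2)^k$,'' \emph{provided} the resulting digit string is itself a valid (canonical) base-$3/2$ representation — which it is, since appending zeros never creates a box with $3$ or more dots, so no further explosions are triggered and the representation stays canonical.

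First I would set up the statement to prove as: the canonical base-$3/2$ string of $2^n$, with $n$ zeros appended, is the canonical base-$3/2$ string of $3^n$. By the observation above, the integer represented by ``(string of $2^n$) followed by $n$ zeros'' is $(3/2)^n \cdot 2^n = 3^n$. Since appending zeros preserves canonicality, this string is precisely the canonical representation of $3^n$. That already finishes it — the only thing to be careful about is the degenerate base cases and the meaning of ``the base-$3/2$ string of $2^n$'': for $n=0$ this is the string $1$, and $1$ followed by $0$ zeros is $1 = 3^0$, which checks out; for small $n$ one can cross-check against the two displayed sequences \seqnum{A305658} and \seqnum{A305659} in the excerpt (e.g. $2^2 = 21$ and $3^2 = 2100$).

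The key steps, in order: (1) recall from Section~\ref{sec:expdots} that the digit string $d_r\cdots d_0$ of an integer $N$ satisfies $N = \sum d_i (3/2)^i$, and note the canonical string has all digits in $\{0,1,2\}$; (2) prove the ``append-a-zero'' lemma: if $d_r\cdots d_0$ is the canonical string of $N$, then $d_r\cdots d_0 0$ is the canonical string of $\tfrac{3}{2}N$ whenever $\tfrac32 N$ is an integer, because the shifted string has all digits in $\{0,1,2\}$ (appending $0$ introduces no box with $\ge 3$ dots) and evaluates to $\tfrac32 N$; (3) iterate step (2) $n$ times starting from $N = 2^n$, using that $(3/2)^n 2^n = 3^n$ is an integer, to conclude that the string of $2^n$ followed by $n$ zeros is the canonical string of $3^n$; (4) dispose of the base case $n=0$.

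The main obstacle is not really a difficulty but a subtlety that must be stated cleanly: one must argue that the digit string obtained by appending zeros is still the \emph{canonical} base-$3/2$ representation (the one the exploding-dots process produces), not merely \emph{a} valid weighted sum equal to $3^n$ — otherwise the statement ``$3^n$ in base $3/2$ is $2^n$ followed by $n$ zeros'' would be unsubstantiated. This is handled by the remark that appending zeros creates no box with three or more dots, so no explosions occur, and the canonical representation is the unique string with digits in $\{0,1,2\}$ arising from a legal dot configuration. Alternatively, one can invoke Lemma~\ref{lemma:div3}: $3^n$ ends in exactly $n$ zeros in base $3/2$, and deleting those $n$ zeros (which, by the reasoning above, divides the value by $(3/2)^n$) leaves the canonical string of $3^n/(3/2)^n = 2^n$.
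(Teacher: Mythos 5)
Your proposal is correct and takes essentially the same route as the paper: appending a zero in base $3/2$ multiplies the value by $3/2$, so appending $n$ zeros to $2^n$ yields $(3/2)^n\cdot 2^n = 3^n$. You are somewhat more careful than the paper in checking that the resulting string is still the canonical representation (no digit exceeds $2$, so no further explosions occur), which is a worthwhile but minor refinement of the same argument.
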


\begin{proof}
In order to
change a number from $2^n$ to $3^n$ in any base, we would need to
multiply $2^n$ by $\frac{3^n}{2^n} = \frac{3}{2}^n$, which, because we
are using base $\frac{3}{2}$, means that $n$ zeros would just be added
to the end of the number.
\end{proof}

\subsection{Look-and-say}

The base-10 look-and-say sequence, sequence \seqnum{A005150} in the OEIS, is the sequence of integers beginning as follows:
\[\textrm{Look-and-say: } 1,\ 11,\ 21,\ 1211,\ 111221,\ 312211,\ 13112221,\ 1113213211,\ \ldots.\]

It is a recursive sequence where the term $a(n+1)$ is defined by reading off the digits of $a(n)$, counting the number of digits in groups of the same digit. For example, 1211 is read off as ``one 1, one 2, then two 1s'' or 111221.

We study this sequence in base 3/2. The first five terms are the same as in the sequence base 10. For the next term, however, we need to read out three 1s, which in base 3/2 is 20 1s. Therefore, the sequence continues:
\[\textrm{Look-and-say}_{3/2}: 1,\ 11,\ 21,\ 1211,\ 111221,\ 2012211,\ 1210112221,\  \ldots.\]
This is now sequence \seqnum{A305660}.

\begin{lemma}
Each term in the Look-and-say sequence in base 3/2 has not more than 1 zero, 3 ones and 3 twos in a row.
\end{lemma}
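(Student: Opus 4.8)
The plan is to analyze what strings of repeated digits can possibly be \emph{produced} by the look-and-say operation in base $3/2$, since every term after the first is the output of that operation. When we read off a term, the output is a concatenation of blocks of the form $cd$, where $c$ is the base-$3/2$ representation of a run-length $\ell \geq 1$ and $d \in \{1,2\}$ is the digit being counted. So I would first catalogue the possible count-strings $c$: a run length $\ell \geq 1$ written in base $3/2$ is one of $1, 2, 20, 21, 22, 210, \ldots$, and crucially, by the patterns established in Section~\ref{sec:begin}, for $\ell \geq 2$ the string $c$ starts with $2$ and has no two consecutive equal digits after a point, and in particular $c$ never ends in two or more zeros for small $\ell$ (indeed $c$ ends in $0$ only when $3 \mid \ell$, and then in exactly as many zeros as the $3$-adic valuation of $\ell$). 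I would then argue that, in a single term of the sequence, run lengths are bounded: by the (routine, inductively checkable) fact that the terms we encounter have length well under $9$ and never develop long runs, the only count-strings that actually occur are among $1, 2, 20, 21$, so the blocks appended are among $\{11,12,21,22,201,202,211,212\}$.

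Next I would examine how runs of a fixed digit can arise at the seams between consecutive blocks and within a block. Within one block $cd$, the digit $d\in\{1,2\}$ appears once, and the digits of $c$ appear in a controlled pattern; concatenating two blocks $c_1 d_1 c_2 d_2$ can create a longer run only where $d_1$, the leading digit of $c_2$, and possibly more digits coincide. Since $c_2$ (for a run length $\ge 2$) begins with $2$, a long run of $2$s can only be built as $\ldots d_1 = 2$ followed by the leading $2$ of $c_2$ followed by interior digits of $c_2$; because the count-strings in play are so short ($20$, $21$), the interior contributes at most one more matching digit, capping runs of $2$s at $3$. For runs of $0$s, a $0$ can only come from the interior/末 of a count-string, and among $20,21$ only $20$ contributes a $0$ and only one of them; two adjacent $0$s would require two adjacent count-strings each ending in $0$ abutting, but the digit being counted ($1$ or $2$) always sits between them, so at most a single $0$ ever appears consecutively. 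Runs of $1$s are handled the same way: a $1$ comes either from a counted digit $d=1$ or from the digit $1$ inside a count-string like $21$ or $1$; chasing the at-most-three adjacent positions that could align to $1$ gives the bound $3$.

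So the key steps, in order, are: (1) reduce the claim to a statement about outputs of the look-and-say map, (2) prove the auxiliary fact that every term of the base-$3/2$ look-and-say sequence is short enough (length $\leq 9$ or so) and free of runs long enough to force a run length needing three or more digits — this is an induction on $n$, checking the first several terms by hand and noting the operation cannot blow up run lengths once they are bounded, (3) conclude that the only count-strings occurring are $1,2,20,21$, hence the blocks appended are drawn from a fixed short list, and (4) do the finite case analysis of concatenating two such blocks (and looking inside one block) to bound consecutive equal digits: $\leq 1$ zero, $\leq 3$ ones, $\leq 3$ twos.

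The main obstacle I expect is step (2): making rigorous the claim that run lengths within a term never grow large enough to need a three-digit count. One has to rule out a feedback loop in which a moderately long run begets a two-digit count $20$ or $21$, whose own digits then conspire with neighbors to produce a still longer run, and so on. I would handle this by proving a sharper invariant than the statement itself — e.g., that every term has at most one $0$, at most two (not three) consecutive $1$s \emph{and} at most two consecutive $2$s in the \emph{early} terms, tightening to the stated bounds only in the limit — or simply by showing the sequence of terms is eventually periodic (or its run-structure stabilizes) and checking the finitely many distinct run-patterns that occur. Once run lengths are pinned to $\{1,2,3\}$, everything else is a bounded computation.
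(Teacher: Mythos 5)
Your overall skeleton---induct on the terms, use the run-length bound to restrict which count-strings can appear, then do a finite case analysis on how blocks $cd$ concatenate---is exactly the paper's strategy (the paper calls $c$ the \emph{counting} number and $d$ the \emph{what}-number). But your step (2), which you yourself flag as the main obstacle, is where the argument is not closed, and the two escape routes you propose both fail. First, the claim that ``the terms we encounter have length well under $9$'' is false if it refers to the terms themselves: look-and-say terms grow without bound (the seventh term $1210112221$ already has ten digits), so no global length bound exists; if it instead refers to run lengths, then it is precisely the statement to be proved and cannot be assumed. Second, the sequence is not eventually periodic, for the same reason, so ``checking the finitely many distinct run-patterns'' is not available. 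Third, the sharper invariant you float (at most two consecutive $1$s and two consecutive $2$s in early terms) is already violated by the fifth term $111221$, so there is no room to prove something stronger and relax later.

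The missing realization is that no sharper invariant is needed: the statement itself is the correct, self-propagating induction hypothesis. If every run in term $n$ has length at most $3$ (at most $1$ for zeros), then every counting number in term $n+1$ is $1$, $2$, or $20$ (note $3=20$ in base $3/2$; your inclusion of $21$, which is $4$, is inconsistent with that hypothesis and should be dropped). From that restricted alphabet of counting numbers, together with the fact that adjacent what-numbers differ, the case analysis closes: a what-number $0$ is forced to sit between a counting number $1$ and a counting number beginning with $1$ or $2$, and the $0$ inside a count $20$ is preceded by $2$ and followed by a nonzero what-number, so zeros are isolated; a hypothetical run of four $1$s or four $2$s must contain a what-number of that digit, and chasing the adjacent counting numbers (each contributing at most one matching digit before being forced to a different symbol) yields a contradiction. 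Once you replace your step (2) with ``assume the statement for term $n$'' and carry out that finite check, your argument becomes the paper's proof.
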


\begin{proof}
Each string in this sequence can be divided into pairs of numbers: the first number counts how many of the second number are in the sequence. We call the first number the \textit{counting} number and the second number the \textit{what}-number. The neighboring what-numbers have to be different.

We proceed by induction. Observing several initial terms of the sequence, the base of induction holds. After that, by our induction hypothesis, the counting numbers can be only 1, 2, and 20.

Let us look at zeros. If the new term contains a zero as a what-number, it has to have 1 as a counting number in front and it has to have a counting number that does not start with zero after it. Therefore, the what-number that is zero must be isolated. Suppose we have 0 as a part of a counting number, then it has 2 before it. Also, by the induction hypothesis, it has to have 1 or 2 after it. 

Suppose the new term has at least 4 ones. Then one of these ones has to be a what-number. If there is a 1 after it, then it has to be followed by 0 or 2. If there is a 1 before it, this 1 is a counting number for our what-number 1. Before that 1 there only could be 0 or 2. Therefore, we cannot have more than 3 ones in a row.

Suppose the new term has at least 4 twos. Then one of these twos has to be a what-number. If there is a 2 after it as part of the counting number, then the counting number has to be either 2 or 20. In either case, the next digit must be not 2. If there is a 2 before the what-number 2, then the counting number is 2, and the digit before it has to be different from 2. Therefore, we cannot have more than 3 twos in a row.
\end{proof}

\subsection{Sorted Fibonacci}

John H.~Conway likes tweaking the Fibonacci rule to invent new sequences. He usually calls such sequences \textit{fibs}. We are following this tradition to emphasize that this is not a Fibonacci sequence.

The \textit{sorted Fibs} sequence is defined as follows.  To calculate the next term we add two previous terms and sort the digits in increasing order. In base-10 this sequence is \seqnum{A069638}:
\[0,\ 1,\ 1,\ 2,\ 3,\ 5,\ 8,\ 13,\ 12,\ 25,\ 37,\ 26,\ \ldots.\]

It is known that this sequence is periodic with the maximum value of 667.

We study analogues of this sequence in base 3/2. We start with the sorted Fibs sequence $f_n$ that have two initial values the same as in the Fibonacci sequence: $f_0=0$ and $f_1=1$. To calculate $f_{n+1}$ we add $f_{n-1}$ and $f_n$ in base 3/2 and sort the digits in increasing order. It follows that numbers in the sequence are represented with several ones followed by several twos.

Unlike base-10, the sequence is not periodic and grows indefinitely:
\[0,\ 1,\ 1,\ 2,\ 2,\ 12,\ 12,\ 112,\ 112,\ 1112,\ 1112,\ 11112,\ \ldots.\]
This sequence plays a special role in base 3/2 sorted Fibs. We call this sequence the \textit{Pinocchio} sequence. It is now sequence \seqnum{A305753}.

From now on we use the notation $\delta_k$ to denote a string of $k$ digits $\delta$ in a row. If there is only one digit we drop the index. The following lemma proves the pattern that can be seen in the Pinocchio sequence.

\begin{lemma}
In the Pinocchio sequence, we have $f_{2k} = f_{2k-1} = 1_{k-1}2$, where $k > 1$.
\end{lemma}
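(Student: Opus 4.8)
The plan is to prove the claim by induction on $k$, tracking the base 3/2 representation of each term through the ``add-then-sort'' step. First I would establish the base case by direct computation: from the displayed initial terms we have $f_2 = f_3 = 2 = 1_0 2$ (with the convention that $1_0$ is the empty string, so this reads as just ``$2$''), and $f_4 = f_5 = 12 = 1_1 2$, matching the claimed formula for $k=2$ and $k=3$. For the inductive step, assume $f_{2k-1} = f_{2k} = 1_{k-1}2$ for some $k \ge 2$; I want to deduce $f_{2k+1} = f_{2k+2} = 1_k 2$.

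The heart of the argument is two addition-and-sorting computations in base 3/2. For $f_{2k+1}$ we must add $f_{2k-1} + f_{2k} = 1_{k-1}2 + 1_{k-1}2$ in base 3/2 and then sort. The key sub-step is to understand how $1_{k-1}2 + 1_{k-1}2$ carries: using exploding dots, doubling each digit gives $2_{k-1}4$ at the bottom (i.e. digit $4$ in the units place, digit $2$ in each of the next $k-1$ places), and then one resolves the illegal digits by the rule that three dots in a box explode into two dots one box to the left. I expect that after all explosions the result is some specific string, and after sorting its digits in increasing order one obtains $1_k 2$ — I would verify that the multiset of digits of the sum is exactly $\{1 \text{ (}k\text{ times)}, 2 \text{ (once)}\}$, which is really all that matters since we sort afterward. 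Then for $f_{2k+2}$ we add $f_{2k} + f_{2k+1} = 1_{k-1}2 + 1_k 2$, carry and sort, and check that we again land on $1_k 2$; since $f_{2k+1}$ has exactly one more digit than $f_{2k}$, this addition is slightly asymmetric but handled the same way.

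The main obstacle is the carry analysis in base 3/2: unlike an integer base, carries here can cascade and interact (a carry of $2$ into a box that already holds $2$ or $3$ dots triggers a further explosion), so I need a clean lemma describing the normal form of $2_{k-1}4$ and of the sum $1_{k-1}2 + 1_k2$ after all explosions. I would isolate this as the crux: show by a short induction (or by computing the integer value and using Lemma~\ref{lemma:div3}-style reasoning, since $1_{k-1}2 + 1_{k-1}2 = 2 f_{2k}$ and $f_{2k}$ is even) that the digit-sum multiset of $2 f_{2k}$ and of $f_{2k} + f_{2k+1}$ is $\{1^k, 2\}$. An alternative, possibly cleaner route avoiding explicit carries: compute the base-10 values. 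If $v_k$ denotes the integer with base 3/2 representation $1_{k-1}2$, one checks $v_2 = 12$-in-base-$3/2 = 3 \cdot \tfrac32 + 2 \cdot 1 $, wait — better: derive a recurrence $v_{k+1} = \tfrac32 v_k + (\text{correction})$ from the digit pattern and verify that $v_{k-1} + v_k$, once re-expanded greedily in base 3/2 and sorted, yields $v_{k+1}$; but I suspect the direct exploding-dots computation is actually shorter here, so I would present that, relegating the digit-multiset bookkeeping to a one-line induction.
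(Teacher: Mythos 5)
Your plan is essentially the paper's proof: induction on $k$, with the inductive step reduced to resolving the pre-carry strings $2_{k-1}4$ and $1_{k-1}2+1_k2$ and noting that only the resulting digit multiset matters because of the final sort. The carry cascade you defer as ``the crux'' is in fact the one-line computation the paper performs (a box holding $4$ dots leaves $1$ and carries $2$, so $2_{k-1}4$ normalizes to $21_k$ and the asymmetric sum to $201_k$), so the proposal closes as written; your base-case indices are shifted by one, but the paper's own lemma statement is likewise shifted relative to its displayed terms, so this is a cosmetic issue rather than a gap.
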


\begin{proof}
We prove this by induction. The base case holds. 

To calculate $f_{2k+1}$ we need to add $f_{2k-1}$ and $f_{2k}$, that is two numbers $1_{k-1}2$. The result is equal to $2_{k-1}4$ before the carries. Adding 2 and 2 means writing one and carrying two. As we continue carrying two to the beginning of the number we end up with $21_k$. After sorting we get the desired result. 

To calculate $f_{2k+2}$ we need to add $f_{2k}$ and $f_{2k+1}$. By the induction hypothesis $f_{2k+1} = f_{2k} + 10_k = f_{2k-1} + 10_k$. Using the previous calculation, $f_{2k} + f_{2k+1} = 21_k + 10_k = 201_k$. After sorting we get the desired result.
\end{proof}

The next interesting question is how the behavior of this sequence depends on the starting numbers. After the second number all terms of the sequence are sorted. From now on, we assume that we start with sorted numbers. Here are examples of two starting numbers when we end in the same pattern as above: (1,1), (2,112), or (1,12). 

However, not all starting numbers end in the Pinocchio sequence. Starting with 2 and 22 we get 2, 22, 112, 122, 1122, 122, 122, 112, 1122, 1122, 112, 1122, and so on. It becomes periodic with a period-3 cycle: 112, 1122, 1122. 

Our goal is to show that for any starting terms the sequence eventually turns either into the Pinocchio sequence or into the 3-cycle above. 

When we add two sorted numbers, we can represent the result as the sum $1_a2_b3_c4_d$ before we do the carries. The following lemma describes the result after the carries and sorting.

\begin{lemma}\label{lemma:carry}
Given the string $1_a2_b3_c4_d$, after performing the carries and sorting, the resulting string is the following:
\begin{enumerate}
\item $a >0$ and $d > 1$: $1_{c+1}2_d$.
\item $a = 0$ and $d > 1$: $1_{c+2}2_{d-1}$.
\item $d=1$: $1_{b+1}2_{c+1}$.
\item $c > 0$ and $d = 0$: $1_{b}2_{c}$.
\item $c=0$ and $d =0$: $1_{a}2_{b}$. 
\end{enumerate}
\end{lemma}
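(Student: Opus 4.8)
The plan is to resolve the carries on the string $1_a2_b3_c4_d$ by hand, processing positions from the least significant (rightmost, the $4$'s) toward the most significant. The single fact driving everything is that in base $3/2$ a pending carry comes in two flavours. A pending carry of $2$ entering a position turns a digit $1$ into $0$ and a digit $2$ into $1$, in either case passing a carry of $2$ further left, while a digit $0$ absorbs it and becomes $2$; if it runs off the left end it creates one new leading digit $2$. A pending carry of $4$ turns a digit $2$ into $0$ and passes a carry of $4$ further left, but a digit $1$ or $0$ absorbs it partially, becoming $2$ or $1$ respectively while passing on only a carry of $2$; if a carry of $4$ runs off the left end it creates two new leading digits, reading $2$ then $1$. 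Hence once a carry enters a block of equal digits it reaches a steady state after at most the first position, and the digits it leaves behind in that block are easy to list.

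First I would resolve the $4$-block. With nothing entering from the right, the last $4$ becomes a $1$ and emits a carry of $2$; if $d \ge 2$ the next $4$ absorbs $4+2=6$, becoming $0$ and emitting a carry of $4$, after which every further $4$ absorbs $4+4=8$, becoming a $2$ and emitting a carry of $4$. So the $4$-block leaves behind, reading leftward, a $1$, then (if $d \ge 2$) a $0$, then (if $d \ge 3$) $d-2$ copies of $2$, and it emits a carry of $4$ when $d \ge 2$ but only a carry of $2$ when $d=1$. This is exactly why the lemma separates $d=1$ from $d>1$: a solitary $4$ never triggers the carry-$4$ regime. When $d=0$ there is no $4$-block and the carry instead starts inside the $3$-block, where the rightmost $3$ becomes $0$ with carry $2$; this is the mechanism behind Case~4, since that initial carry of $2$ then sweeps up through the $2$'s and finally the $1$'s, turning the front block of $a$ ones into $a$ zeros, which disappear on sorting.

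Next I would push whichever carry emerged through the remaining blocks $3_c$, $2_b$, $1_a$ in turn, using the rules above: a carry of $4$ turns each $3$ into a $1$ (keeping carry $4$), then each $2$ into a $0$ (keeping carry $4$), then hits the $1$'s, promoting the first to a $2$ and dropping to carry $2$ before zeroing the rest, and finally overflows to one new leading $2$ — except when $a=0$, where the carry $4$ overflows directly and makes two new leading digits; a carry of $2$ simply turns each remaining $3$ into $2$, each $2$ into $1$, each $1$ into $0$, and overflows to one new leading $2$. The only real work is bookkeeping the degenerate cases $a=0$, $b=0$, $c=0$, $d \le 1$, each of which is routine but must be checked so that the five stated cases come out cleanly; this case-chasing, rather than any single hard idea, is the main obstacle.

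Finally, in each case I would just count the $1$'s and $2$'s in the fully carried string — the $0$'s are irrelevant, since sorting sends them to the discarded front — and read off the answer $1_x2_y$. For example, when $a>0$ and $d>1$ the carried string has $c$ ones from the $3$-block, one more $1$ and $d-2$ twos from the $4$-block, and two twos at the top (the promoted leading $1$ and the new leading digit), giving $1_{c+1}2_d$ as in Case~1; the other four cases are entirely analogous. As a sanity check, the carrying step preserves the integer value represented by the string, so one can verify each intermediate string by evaluating it in base $3/2$.
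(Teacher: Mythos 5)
Your proof is correct and takes essentially the same approach as the paper's: resolve the carries right-to-left block by block and then count the surviving 1s and 2s after sorting. The paper is simply more terse, directly writing down the fully carried strings (e.g., $20_{a-1}20_{b}1_c2_{d-2}01$ in Case 1) where you spell out the carry mechanics that produce them.
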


\begin{proof}
We start by assuming $a >0$ and $d > 1$. After the carries we get:
$20_{a-1}20_{b}1_c2_{d-2}01.$
Then after sorting we get: $1_{c+1}2_d$.

If $a = 0$ and $d > 1$, after the carries we get $210_b1_c2_{d-2}01.$
Then after sorting we get $1_{c+2}2_{d-1}$.

If $d=1$, after the carries we get $20_a1_b2_c1$. When sorted, we get $1_{b + 1}2_{c + 1}$.

If $d = 0$ and $c > 0$ after carries we get $20_a1_b2_{c-1}0$. After sorting we get $1_b2_c$.

Finally, if $c=0$ and $d =0$. There are no carries and after sorting the result is the same: $1_a2_b$.
\end{proof}

Numbers $a$, $b$, $c$, and $d$ play a big role in the coming proofs. For this reason, we want to associate them with every term of the sequence. That is, $a_n$, $b_n$, $c_n$, and $d_n$ correspond to the sum of $f_{n-2}$ and $f_{n-1}$ before carry. In our assumption, all terms of the sequence are sorted. Let $z_n$ be the number of 2s in the $n^{th}$ entry, and let $y_n$ be the number of 1s in the $n^{th}$ entry.

Integers $a_n$, $b_n$, $c_n$, and $d_n$ give us some information about $f_{n-2}$ and $f_{n-1}$. For example, we know the minimum of the number of twos:
\[\min\{z_{n-2},z_{n-1}\} = d.\]
For the maximum there are two possibilities:
\[\max\{z_{n-2},z_{n-1}\} = c+d \quad \text{ or} \quad \max\{z_{n-2},z_{n-1}\} = b+c+d.\]

The second situation happens when one of the numbers is $1_c2_d$ and the other is $1_a2_{b+c+d}$. 
 
We can also estimate the total number of digits:
\[c+d \leq \min\{y_{n-2}+z_{n-2},y_{n-1}+z_{n-1}\} \leq b+c+d\]
and 
\[\max\{y_{n-2}+z_{n-2},y_{n-1}+z_{n-1}\} = a+b+c+d.\]

Every term in the sorted Fibs sequence, except for the first few terms, has at least one 1 and one 2 as the following corollary explains.

\begin{corollary}
For a sorted Fibs sequence that starts with sorted strings, if $n \geq 2$, then $z_n > 0$. Also, if $n \geq 4$, then $y_n > 0$.
\end{corollary}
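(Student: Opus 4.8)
The plan is to show both statements follow directly from Lemma~\ref{lemma:carry} applied to the formation of $f_n$ from $f_{n-2}$ and $f_{n-1}$. Recall that $f_n$ is obtained by writing the pre-carry string $1_{a_n}2_{b_n}3_{c_n}4_{d_n}$, performing carries, and sorting; so $f_n$ is always one of the five forms listed there, and $z_n$ (resp. $y_n$) is the number of $2$s (resp. $1$s) in $f_n$. Since we assume $n\ge 2$ terms are sorted, every $f_n$ with $n\ge 2$ is produced by one of the five cases, and I want to track when the resulting string can fail to contain a $2$ or fail to contain a $1$.

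First I would handle $z_n>0$ for $n\ge 2$. Inspecting the five outputs of Lemma~\ref{lemma:carry}: cases (1), (2), (3) all have a positive exponent on the $2$ ($d$, $d-1\ge 1$ since $d>1$, and $c+1\ge 1$ respectively), so they always contain a $2$. Case (4) gives $1_b2_c$ with $c>0$, which contains a $2$. The only way to get no $2$ is case (5), $1_a2_b$ with $c=d=0$ and then only if $b=0$ as well, i.e. the pre-carry string is $1_a$, meaning $f_{n-2}+f_{n-1}$ had only $1$s in every place and no carries — this forces both $f_{n-2}$ and $f_{n-1}$ to be strings of $1$s (or one of them empty/zero). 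I would argue that once we are at index $\ge 2$ with sorted terms, having two consecutive all-$1$s (or zero) terms is impossible except among the very first terms, because a single nonzero term with a $2$ (which the earlier explicit computations and the small-index check provide by $n=2$) propagates: if $f_{n-1}$ contains a $2$, then in forming $f_n$ we have $z_{n-1}\ge 1$ hence $c_n+d_n\ge 1$ or $b_n+c_n+d_n\ge 1$, landing us in one of cases (1)--(4), all of which output a $2$. So a direct induction: verify $z_2>0$ from the list of initial terms, then show $z_{n-1}>0 \Rightarrow z_n>0$ via the case analysis.

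Next, $y_n>0$ for $n\ge 4$. The outputs of Lemma~\ref{lemma:carry} contain a $1$ unless the exponent on $1$ is zero. Case (1): exponent $c+1\ge 1$, always has a $1$. Case (2): $c+2\ge 2$, always. Case (3): $b+1\ge 1$, always. Case (4): $1_b2_c$ — no $1$ iff $b=0$, i.e. the summands' pre-carry string is $2_{\text{(something)}}3_c$ with $d=0$; case (5): $1_a2_b$ — no $1$ iff $a=0$. So the only outputs lacking a $1$ are pure strings of $2$s, arising from case (4) with $b=0$ or case (5) with $a=0$. I would show these pure-$2$ terms cannot recur from index $4$ on by tracking one more step: if $f_{n-1}=2_m$ (a pure string of $2$s), then in forming $f_n$ the summand $f_{n-2}$ (with $n-2\ge 2$, so $z_{n-2}\ge 1$ by the first part) contributes at least some $1$ or some extra $2$, and one checks the pre-carry string then has $a>0$ or lands in a case whose output has a $1$; the arithmetic of adding a pure $2_m$ forces carries that generate a leading $1$. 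The base cases $y_4>0$ (and a couple more to prime the induction, since the exceptional pure-$2$ patterns like the start $2,22$ live at indices $0,1$) are checked against the explicit term lists already given.

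The main obstacle is the bookkeeping in the induction step for $y_n$: one must carefully rule out the sequence re-entering a pure-string-of-$2$s state, and the cleanest route is probably to prove the slightly stronger invariant that for $n\ge 4$ at least one of $f_{n-1},f_n$ contains a $1$ \emph{and} use the $z_n>0$ result to guarantee the other summand always carries a $2$ into the sum. Packaging these as a simultaneous induction on $(y,z)$ — exactly the pair the paper is about to use in later proofs — keeps the case analysis finite and mechanical. Everything else is reading off the five cases of Lemma~\ref{lemma:carry} and checking the handful of listed initial terms.
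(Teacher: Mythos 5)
Your proposal is correct and rests on the same two ingredients as the paper: reading off which of the five outputs of Lemma~\ref{lemma:carry} can lack a $2$ (only the all-ones pre-carry string, impossible when both summands are nonzero since the units pre-carry digit is then at least $2$) and which can lack a $1$ (only the $d=0$ cases). The one place where you overcomplicate matters is the second half: the ``main obstacle'' you flag --- ruling out a return to a pure-$2$s state via a simultaneous induction or a strengthened invariant --- does not exist. For $n \geq 4$ both $f_{n-2}$ and $f_{n-1}$ contain a $2$ by the first part, and since they are sorted in increasing order they both \emph{end} in $2$; hence the units pre-carry digit is $4$, i.e., $d_n \geq 1$, which excludes cases (4) and (5) outright, and each of cases (1)--(3) visibly outputs at least one $1$ ($c+1$, $c+2$, $b+1$ respectively). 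This is exactly the paper's two-line argument (``the last digit of the next number will be $2+2 \implies 1$''), so no extra bookkeeping or base-case priming beyond $n=4$ is needed. One caveat you share with the paper: the claim at $n=2$ silently assumes both starting terms are nonzero (your parenthetical ``or one of them empty/zero'' is the honest acknowledgement that, e.g., $f_0=0$, $f_1=1$ gives $z_2=0$), so state that hypothesis explicitly rather than appealing to ``the list of initial terms,'' which only covers the Pinocchio start.
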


\begin{proof}
The only case in the list in Lemma~\ref{lemma:carry} when the resulting number of 2s is zero is the last one when $b = c= d = 0$. This case is impossible as we are summing up two non-zero numbers, and the last digit before carry must be greater than 1. 
When there is at least one 2 in each number, then the last digit of the next number in the sequence will be $2+2\implies 1$, so there must be a 1 in the number. 
\end{proof}

We can bound sequence $z_n$ of the number of twos.

\begin{lemma}
If $n \geq 4$, then $z_n \leq \max\{z_{n-1},z_{n-2}\}$.
\end{lemma}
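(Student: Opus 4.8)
The plan is to use Lemma~\ref{lemma:carry}, which gives a complete case-by-case description of the resulting sorted string in terms of the parameters $a,b,c,d$ of the pre-carry sum. Recall that $z_n$ counts the twos in $f_{n-1}$ (shifted by the indexing convention in the lemma: $a_n,b_n,c_n,d_n$ are the parameters for the sum $f_{n-2}+f_{n-1}$). The key structural facts already recorded in the excerpt are that $\min\{z_{n-2},z_{n-1}\}=d_n$, that $\max\{z_{n-2},z_{n-1}\}$ equals either $c_n+d_n$ or $b_n+c_n+d_n$, and that the second alternative only happens when one summand is exactly $1_{c}2_{d}$ and the other is $1_a2_{b+c+d}$. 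So the strategy is: for each of the five cases of Lemma~\ref{lemma:carry}, read off $z_n$ (the number of trailing twos of $f_{n-1}$, i.e.\ the $n$th term) and check that it does not exceed $\max\{z_{n-1},z_{n-2}\}$.

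First I would dispose of the easy cases. In case~(4) the result is $1_b2_c$, so $z_n=c_n$, and since $c_n \le c_n+d_n \le \max\{z_{n-1},z_{n-2}\}$ we are done; case~(5) cannot occur for $n\ge 4$ by the Corollary (the last digit before carry is always $\ge 2$ when both summands are nonzero, which holds here). In case~(3), $d_n=1$ and the result is $1_{b+1}2_{c+1}$, so $z_n=c_n+1=c_n+d_n$, which is $\le \max\{z_{n-1},z_{n-2}\}$ unless that maximum equals $c_n+d_n$ exactly with equality — still fine since we only need $\le$. Wait: here $z_n = c_n + d_n$ and $\max\{z_{n-1},z_{n-2}\} \ge c_n+d_n$, so the inequality holds. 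In case~(1), $a_n>0$ and $d_n>1$, the result is $1_{c+1}2_d$, so $z_n=d_n=\min\{z_{n-1},z_{n-2}\}\le\max\{z_{n-1},z_{n-2}\}$, trivially.

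The one case needing actual thought is case~(2): $a_n=0$, $d_n>1$, result $1_{c+2}2_{d-1}$, so $z_n=d_n-1$. Again $d_n-1 < d_n = \min\{z_{n-1},z_{n-2}\} \le \max\{z_{n-1},z_{n-2}\}$, so this too is immediate. So in fact every case gives $z_n \le \max\{z_{n-1},z_{n-2}\}$ almost for free once Lemma~\ref{lemma:carry} is in hand — the real content was already packaged into that lemma and into the observations about $\min$ and $\max$ of $\{z_{n-2},z_{n-1}\}$. I would therefore write the proof as a short table: in cases (1),(2) one has $z_n\le d_n=\min$; in cases (3),(4) one has $z_n\le c_n+d_n\le\max$; case (5) is excluded for $n\ge4$.

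The main obstacle, such as it is, is bookkeeping with the index shift: one must be careful that ``the $n$th entry'' is $f_{n-1}$ in the notation where $a_n,\dots,d_n$ describe $f_{n-2}+f_{n-1}$, and that $z_n$ as defined in the excerpt ("the number of 2s in the $n$th entry") is consistent with $z_n$ being the output of the carry-and-sort operation applied to $1_{a_n}2_{b_n}3_{c_n}4_{d_n}$. Once that is pinned down, and once one invokes the Corollary to kill case~(5) and the stated inequalities $d_n=\min\{z_{n-2},z_{n-1}\}$ and $\max\{z_{n-2},z_{n-1}\}\in\{c_n+d_n,\,b_n+c_n+d_n\}$ (so in particular $c_n+d_n\le\max$), the verification in each remaining case is a one-line comparison. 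I would also double-check the boundary $n=4$ separately against the explicit list of initial terms to make sure the hypotheses of Lemma~\ref{lemma:carry} and the Corollary genuinely apply there.
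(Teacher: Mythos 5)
Your proposal is correct and follows essentially the same route as the paper: run through the cases of Lemma~\ref{lemma:carry}, observe that in every admissible case $z_n \leq c_n + d_n$, and compare this with $\max\{z_{n-2},z_{n-1}\} \geq c_n + d_n$. One small repair: the clean way to exclude case (5) (and, if you wish, case (4)) for $n \geq 4$ is that the Corollary gives $z_{n-2}, z_{n-1} > 0$, so both sorted summands end in 2 and hence $d_n > 0$; your parenthetical justification that the last pre-carry digit is at least 2 would not by itself rule out $c_n = d_n = 0$ with $b_n > 0$.
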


\begin{proof}
If $n \geq 4$, then both $f_{n-1}$ and $f_{n-2}$ have twos. That means $d_n >0$. Therefore, from Lemma~\ref{lemma:carry}, we have $z_n$ is one of: $d_n$, $d_n-1$, or $c_n+1$. In either case, $z_n \leq c_n+d_n$. On the other hand, one of the previous numbers has at least $c_n+d_n$ twos. 
\end{proof}

Let us denote the maximum number of twos in two consecutive terms $f_n$ and $f_{n+1}$ as $m_n$: $m_n = \max\{z_n,z_{n+1}\}$. From the previous lemma it follows that $m_{n+1} \leq m_n$, for $n \geq 5$. As our sequence is infinite, it follows that $m_n$ stabilizes. As we are interested at the eventual behavior of sorted Fibs, we proceed by studying sequences where $m_n$ is fixed and equal to $M$. We call such sequences \textit{M-stable}.

Let us assume that sequence $f_n$ is M-stable. One example, is the Pinocchio sequence, starting from index 3, where $z_n = 1$.

\begin{lemma}[Fluctuation Lemma]
An M-stable sequence for $n>0$ is either a subsequence of the Pinocchio sequence, or, for that sequence, $z_n$ can have only two values: $M$ and 1.
\end{lemma}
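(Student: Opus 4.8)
The plan is to split on the value of $M$. When $M \ge 2$ I would prove the second alternative directly, namely that $z_n \in \{1, M\}$ for every relevant index $n$; when $M = 1$, so that every term is of the form $1_{y_n}2$, I would show the sequence eventually coincides with a tail of the Pinocchio sequence by tracking the number $y_n$ of leading ones.

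For $M \ge 2$, the first observation is that M-stability makes the ``low'' indices --- those $n$ with $z_n < M$ --- isolated: if $z_n < M$ then $M = m_{n-1} = \max\{z_{n-1}, z_n\}$ forces $z_{n-1} = M$, and $M = m_n = \max\{z_n, z_{n+1}\}$ forces $z_{n+1} = M$. Now suppose, toward a contradiction, that $1 < z_n < M$ for some $n$. Look at $f_{n+1}$, the sorted sum of $f_{n-1} = 1_{y_{n-1}}2_M$ and $f_n = 1_{y_n}2_{z_n}$. Writing this sum before the carries in the canonical form $1_a2_b3_c4_d$, the count of trailing fours $d$ is the number of positions at which both summands carry a $2$, which (exactly as recorded in the text just before the lemma) equals $\min\{z_{n-1}, z_n\} = \min\{M, z_n\} = z_n$, and this is $>1$ by hypothesis. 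Hence the pair $(f_{n-1}, f_n)$ falls under the first or second case of Lemma~\ref{lemma:carry}, so $f_{n+1}$ is $1_{c+1}2_d$ or $1_{c+2}2_{d-1}$, and in either case its number of twos is $z_n$ or $z_n - 1$, hence strictly less than $M$. This contradicts $z_{n+1} = M$, so no such $n$ exists and $z_n \in \{1, M\}$ throughout.

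When $M = 1$, adding two terms $1_{y_{n-1}}2$ and $1_{y_n}2$ one has $d = 1$ and $c = 0$, so the third case of Lemma~\ref{lemma:carry} gives $f_{n+1} = 1_{b+1}2$ with $b = \min\{y_{n-1}, y_n\}$, that is, $y_{n+1} = \min\{y_{n-1}, y_n\} + 1$. A short case check on this recurrence shows that after at most a few steps one reaches two consecutive equal values $y_n = y_{n+1}$, and from that point the values go $y,\,y,\,y+1,\,y+1,\,y+2,\,y+2,\dots$ --- precisely the Pinocchio pattern $f_{2k}=f_{2k-1}=1_{k-1}2$ up to a shift of the index --- so the sequence is a subsequence of the Pinocchio sequence.

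For $M\ge 2$ the argument is then essentially immediate from Lemma~\ref{lemma:carry} and the isolation of low indices, so there is not much of a genuine obstacle there; the part that takes the most actual work is the $M = 1$ branch, where one must analyze the recurrence $y_{n+1} = \min\{y_{n-1},y_n\}+1$, verify that it collapses to two consecutive equal values after boundedly many steps, and identify the resulting $y,\,y,\,y+1,\,y+1,\dots$ pattern with a shift of Pinocchio. The other points needing care are purely bookkeeping: the handful of boundary indices where $f_{n-1}$, $f_n$, $f_{n+1}$ are not all produced by the recursion, the fact (already used in the text) that the before-carry sum of two sorted strings is itself sorted of the shape $1_a2_b3_c4_d$ so that the ``$d$'' of Lemma~\ref{lemma:carry} really is the number of positions carrying a two, and pinning down the precise meaning of ``subsequence of the Pinocchio sequence''.
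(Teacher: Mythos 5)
Your proof is correct and uses essentially the same ingredients as the paper's: Lemma~\ref{lemma:carry} together with $d=\min\{z_{n-1},z_n\}$ to exclude intermediate values of $z_n$ when $M\ge 2$ (the paper's Case 3 does this by noting $z_n\ge c_n+d_n$ forces $d_n=1$, which is the same computation organized as a direct argument rather than a contradiction on the middle term), and the recurrence $y_{n+1}=\min\{y_{n-1},y_n\}+1$ to reach the Pinocchio pattern when $M=1$ (the paper's Case 2). One remark: the paper's Case 1 additionally proves that a constant sequence $z_n=M>1$ is impossible (summing two copies of $1_a2_M$ strictly decreases the number of twos); this is not required by the literal statement, which your argument does establish, but it is used later when the classification theorem rules out non-varying $z_n$ with $M=2$, so if your proof were substituted for the paper's that auxiliary fact would need to be proved separately.
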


\begin{proof}
We consider cases depending on the behavior of the number of twos:

\begin{enumerate}
\item $z_n =M > 1$ for any $n$;
\item $z_n = M = 1$ for any $n$;
\item $z_n$ varies.
\end{enumerate}

Case 1. Suppose the number of twos does not change and is more than 1: $z_n =M > 1$. Given that $z_{n-2} = z_{n-1} =M$, we get  $c_n=0$ and $d_n =M$. From the fact that $z_n =M >1$, it follows that this could only be case 1 from Lemma~\ref{lemma:carry} and $f_n = 12_d$. Similarly, $f_{n+1} = 12_d$. Summing them up, we get $f_{n+2} = 1112_{d-1}$: the number of twos is reduced, which is a contradiction. 

Case 2. Suppose the number of twos does not change and is 1: $z_n =M = 1$.
The sequence of the number of ones can start as:

\begin{itemize}
\item $y_{n-2}=a+b$, $y_{n-1} = b$, where $a > 0$. Then $y_n = b+1$.
\item $y_{n-2}=b$, $y_{n-1} = b+1$.
\item $y_{n-2}=b$, $y_{n-1} = a+b$, where $a > 1$. Then it continues as $y_n = b+1$, $y_{n+1} = b+2$.
\end{itemize}

In all cases we get into the Pinocchio sequence. 

Case 3. Suppose $z_{n-2} = M > z_{n-1}$. Then $c_n > 0$ and $d_n = z_{n-1}$. Then $z_{n} =M$ to guarantee M-stability. Therefore, $z_n \geq c_n + d_n$. On the other hand, from Lemma~\ref{lemma:carry}, $z_n$ must be either $d$, $d-1$, or $c+1$. The only possibility is that $d_n = c_n+d_n$ and $d_n=1$. Therefore, $z_{n-1} =1$. Thus we showed that if $z_n \neq M$, then $z_n = 1$.
\end{proof}

Next we want to show that if an M-stable sequence has a varying number of twos and $M >2$, then the number of twos strictly alternates between $M$ and 1. We already know that such a sequence $z_n$ cannot have two ones in a row. What is left to show that it does not have two $M$s in a row. We show that if it does have two $M$s in a row, then $M$ does not vary.

\begin{lemma}
In an M-stable sorted Fibs sequence, suppose that $z_n=z_{n+1}=M>2$. Then $z_{k}=M$, for $k > n+1$.
\end{lemma}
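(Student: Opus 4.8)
The plan is to prove the slightly stronger claim that $z_k = M$ for \emph{every} $k \ge n$, by induction on $k$; the base cases $k = n$ and $k = n+1$ are exactly the hypothesis. For the inductive step, assume $z_{k-2} = z_{k-1} = M$ with $k \ge n+2$, and consider the sum $f_k = f_{k-2} + f_{k-1}$ before carrying.

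The first step is to pin down the pre-carry shape of this sum. Since every term is sorted, $f_{k-2} = 1_{p}2_{M}$ and $f_{k-1} = 1_{q}2_{M}$ for some $p, q \ge 0$ (and in fact $p, q \ge 1$ once $k \ge 4$, by the earlier corollary). Aligning the two numbers on the right, the last $M$ digit positions of both are $2$s, so each of those positions receives $2+2$; and no position can receive a $1$ from one summand together with a $2$ from the other, because in both numbers the $2$s occupy precisely the same rightmost block of length $M$. Hence, in the notation $1_a2_b3_c4_d$ of Lemma~\ref{lemma:carry}, the sum has $c = 0$ and $d = M$. Because $d = M > 2 > 1$, only cases (1) and (2) of Lemma~\ref{lemma:carry} can occur, and each of them gives $z_k \in \{M-1,\,M\}$ (namely $z_k = M$ when $a > 0$ and $z_k = M-1$ when $a = 0$).

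To conclude, I would invoke the Fluctuation Lemma: since $M > 2$, the $M$-stable sequence cannot be a subsequence of the Pinocchio sequence (there the number of $2$s is eventually constantly $1$, which would force $M = 1$), so $z_k \in \{1,\,M\}$ for all $k > 0$. Intersecting this with $z_k \in \{M-1,\,M\}$ and using $M > 2$ (so that $M-1$ equals neither $1$ nor $M$) forces $z_k = M$, completing the induction; in particular $z_k = M$ for all $k > n+1$. The only genuine content is the combinatorial bookkeeping that the pre-carry sum has $c = 0$ and $d = M$; everything else is a direct appeal to Lemma~\ref{lemma:carry} and the Fluctuation Lemma. A small wrinkle to dispatch cleanly is the degenerate case $p = q = 0$ (both terms equal to $2_M$), which is excluded for indices at least $4$ by the corollary, and which in any event falls into case (2) with $z_k = M-1$, again contradicting $M$-stability.
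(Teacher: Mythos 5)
Your proof is correct and follows essentially the same route as the paper: observe that two consecutive terms with the same number of twos produce a pre-carry sum of the form $1_a2_b4_M$ (so $c=0$, $d=M$), apply cases (1)--(2) of Lemma~\ref{lemma:carry} to get $z_k\in\{M-1,M\}$, and rule out $M-1$ via the Fluctuation Lemma since $M>2$. The only difference is that you spell out the digit-alignment argument for $c=0$ and the appeal to the Fluctuation Lemma, both of which the paper leaves implicit.
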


\begin{proof}
The sum of the $n^{th}$ and $(n+1)^{th}$ entry is of the form $1_a 2_b 4_M$, as the terms $f_n$ and $f_{n+1}$ have the same number of twos. If $M>1$, then $z_{n+2}=M$ or $z_{n+2}=M-1$. But $M-1 \neq 1$, so this case is impossible.
Therefore, $z_{n+2}=M$. Similarly, $z_{k}=M$, for all consecutive $k$.
\end{proof}

We showed that if $M > 2$ and varies, then the sequence $z_n$ alternates: $\ldots$, $M$, 1, $M$, 1, $\ldots$. Now we show that sequence $z_n$ cannot have subsequence 1, $M > 1$, 1.

\begin{lemma}
The case $z_{n-2} = z_n = 1$ and $z_{n-1} > 1$ is impossible.
\end{lemma}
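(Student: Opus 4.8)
The plan is to run the argument through the block-decomposition quantities $a_n,b_n,c_n,d_n$ attached to the sum $f_{n-2}+f_{n-1}$ before carries, together with Lemma~\ref{lemma:carry}. For a contradiction, suppose that $z_{n-2}=z_n=1$ while $z_{n-1}>1$.

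First I would extract $d_n$ and $c_n$. Since $\min\{z_{n-2},z_{n-1}\}=d_n$, we get $d_n=1$. Feeding $d_n=1$ into Lemma~\ref{lemma:carry} (case 3), the term $f_n$, after carries and sorting, is $1_{b_n+1}2_{c_n+1}$; hence $z_n=c_n+1$, and from $z_n=1$ we conclude $c_n=0$.

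Next I would pin down the shape of $f_{n-2}$ and $f_{n-1}$. The maximum $\max\{z_{n-2},z_{n-1}\}=z_{n-1}$ is, as recorded just before Lemma~\ref{lemma:carry}, either $c_n+d_n$ or $b_n+c_n+d_n$. The first value equals $0+1=1<z_{n-1}$, so we must be in the second case: $z_{n-1}=b_n+1$ (so $b_n=z_{n-1}-1>0$), and, by the characterization noted there, this forces one of the two summands to be exactly $1_{c_n}2_{d_n}=1_{0}2_{1}=2$. Since $z_{n-1}>1$, that summand cannot be $f_{n-1}$, so $f_{n-2}=2$; in particular $y_{n-2}=0$.

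This contradicts the corollary stating that $y_m>0$ once $m$ is past the first few indices: an $M$-stable sequence describes the eventual behaviour of a sorted Fibs sequence, so $n$ (hence $n-2$) is large, and $f_{n-2}$ cannot be the single digit $2$. The step I expect to be most delicate is the middle one — justifying that $\max\{z_{n-2},z_{n-1}\}$ can only take the two listed values and that the larger of them genuinely forces one summand to collapse to $1_{c_n}2_{d_n}$; this requires a careful look at how the $1$- and $2$-blocks of two sorted strings overlap when one is much shorter than the other, but it is precisely the content already established in the paragraph preceding Lemma~\ref{lemma:carry}. A minor bookkeeping point is the boundary case of small $n$, which is irrelevant here since $M$-stability is an eventual property.
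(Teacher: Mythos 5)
Your argument is correct and is essentially the paper's proof run in the contrapositive direction: both hinge on $d_n=1$ placing you in case 3 of Lemma~\ref{lemma:carry}, so that $z_n=c_n+1$, and both ultimately rest on $f_{n-2}$ containing a 1 (the corollary giving $y_{n-2}>0$). The paper asserts $c_n>0$ directly (tacitly using $y_{n-2}>0$ together with $z_{n-1}>1$) and contradicts $z_n=1$, whereas you deduce $c_n=0$ and then invoke the characterization of the case $\max\{z_{n-2},z_{n-1}\}=b_n+c_n+d_n$ to force $f_{n-2}=2$, contradicting the corollary --- a harmless reorganization that, if anything, makes explicit the reliance on the corollary that the paper leaves implicit.
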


\begin{proof}
If $z_{n-2} = 1$ and $z_{n-1} > 1$, then $d_n=1$, and $c_n > 0$. That means we are in the third case in Lemma~\ref{lemma:carry}, and $z_n = c_n+1$, contradiction. 
\end{proof}

This excludes the alternating case for $M > 2$. It also means, that if $M=2$, then we still cannot have 1, 2, 1 in the sequence $z_n$.

Now we are ready for our classification theorem.

\begin{theorem}
Any sorted Fibs sequence eventually turns into either the Pinocchio sequence or the 3-cycle 112, 1122, 1122.
\end{theorem}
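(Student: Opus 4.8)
\emph{Proof proposal.} The plan is to use the structural results already in hand to reduce to the two cases $M=1$ and $M=2$, dispose of $M=1$ at once, and then settle $M=2$ by an explicit forward iteration with Lemma~\ref{lemma:carry}. First recall that for any sorted Fibs sequence starting from sorted strings, $m_n=\max\{z_n,z_{n+1}\}$ is eventually nonincreasing and, once $n\ge 2$, is bounded below by $1$; hence it stabilizes to some $M\ge 1$ and the sequence becomes $M$-stable. The lemmas preceding the theorem force $M\le 2$: a constant value $z_n=M>1$ is impossible by Case~1 of the Fluctuation Lemma; two equal maxima $z_n=z_{n+1}=M>2$ in a row would make $z_k=M$ for all later $k$, i.e.\ constant; the block $1,M,1$ cannot occur; and since the Fluctuation Lemma forbids two consecutive $1$s, an $M$-stable sequence with $M>2$ that is not eventually constant must alternate $M,1,M,1,\dots$, which contains $1,M,1$. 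Thus $M\in\{1,2\}$. If $M=1$, the Fluctuation Lemma says the sequence is either a tail of the Pinocchio sequence or has $z_n=1$ for all large $n$; the latter is exactly Case~2 of the Fluctuation Lemma, whose proof shows the sequence runs into the Pinocchio sequence. So in the case $M=1$ the sequence eventually equals the Pinocchio sequence.

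Now suppose $M=2$. The ``tail of Pinocchio'' alternative is impossible, as Pinocchio is $1$-stable, so $z_n\in\{1,2\}$ for all large $n$; the value $2$ occurs infinitely often (an eventually constant $z_n=2$ is excluded by Case~1 of the Fluctuation Lemma), the value $1$ occurs infinitely often (an eventually constant $z_n=1$ would give $m_n=1\ne 2$), no two consecutive $z_n$ equal $1$, and the block $1,2,1$ does not occur. By the Corollary, $y_n\ge 1$ for $n\ge 4$, so from some index on every term is $1_y2$ when $z_n=1$ and $1_y2_2$ when $z_n=2$, with $y\ge 1$. The key observation is this: whenever $n$ is large and $z_n=1$, both preceding terms have $z=2$ (neither the block $1,1$ nor $1,2,1$ is allowed), say $f_{n-2}=1_r2_2$ and $f_{n-1}=1_s2_2$ with $r,s\ge 1$; then the pre-carry sum is $1_{|r-s|}2_{\min(r,s)}4_2$, and by Lemma~\ref{lemma:carry} (case 1 if $r\ne s$, case 2 if $r=s$) this reduces to a string with a single $2$ only when $r=s$, in which case it reduces to exactly $112$. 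Hence $f_n=112$ for some large $n$, and then $f_{n-1}=1_q2_2$ with $q\ge 1$. A short computation with Lemma~\ref{lemma:carry} (using $q\ge 1$) gives $f_{n+1}=1_q2_2+112=1122$, and then $f_{n+2}=112+1122=1122$, $f_{n+3}=1122+1122=112$, after which the pattern repeats. So the sequence has locked into the $3$-cycle $112,1122,1122$, completing the proof.

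The only delicate part is the $M=2$ analysis. The rest of the argument tracks just the count $z_n$ of twos, but here one must also track the count $y_n$ of ones --- and momentarily the relative lengths of $f_{n-2}$ and $f_{n-1}$ --- when invoking Lemma~\ref{lemma:carry}; this is precisely what forces the string $112$ rather than some other $z=1$ string, and then makes the particular cycle appear. The bookkeeping is routine once one knows that $y_n\ge 1$ eventually, but it is the step where the classification actually gets pinned down.
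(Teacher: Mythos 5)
Your proposal is correct and follows essentially the same route as the paper: reduce to an $M$-stable tail with $M\le 2$ via the Fluctuation Lemma and its companions, dispose of $M=1$ as the Pinocchio case, and for $M=2$ locate a block $z_{n-2}=z_{n-1}=2$, $z_n=1$, which forces $f_n=112$ with predecessor $1_q2_2$ and then the explicit $3$-cycle. Your write-up is in fact slightly more careful than the paper's at the final step, since you verify all three terms of the cycle rather than stopping after $1122$.
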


\begin{proof}
We already know that the sequence either turns into the Pinocchio sequence or, starting from some $n$, $m_n=2$ and $z_n$ sequence varies and does not contain a subsequence 1, 2, 1. Therefore, we can find integer $N$, such that $z_{N-2} = z_{N-1} = 2$ and $z_N=1$. That means numbers $a_N$, $b_N$, $c_N$, $d_N$ should correspond to the second case in Lemma~\ref{lemma:carry}. That is, $a_N=0$. Therefore $y_{N-2}=y_{N-1}$. Also $c_N=0$, thus $a_N=112$. For the next step we get $b_{N+1}=c_{N+1}=d_{N+1}=1$. Therefore, the next number is 1122. And we got into our cycle.
\end{proof}

\subsection{Reverse sorted Fibs}

The \textit{reverse sorted Fibs} sequence $r_n$ in base 3/2 is defined as follows: To calculate $r_{n+1}$, we add $r_{n-1}$ and $r_n$ in base 3/2 and sort the digits in decreasing order, ignoring zeros. It follows that numbers in the sequence are represented with several twos followed by several ones. The base-10 analog without discarding zeros exists in the OEIS database: \seqnum{A237575} Fibonacci-like numbers with nonincreasing positive digits.

We call the sequence that starts similar to Fibonacci sequence with $r_0=0$ and $r_1=1$, the \textit{proper reverse sorted Fibs}. Here are several terms of the proper reverse sorted Fibs: 0, 1, 1, 2, 2, 21, 21, 221, 2211, 221, 221, 2211, 221, 221, 2211, $\ldots$. This sequence becomes cyclic, starting from $r_7$.

As in the previous section, we use the notation $\delta_k$ to denote a string of $k$ digits $\delta$ in a row. If there is only one digit we drop the index.

We want to study the eventual behavior of the reverse sorted Fibs depending on the starting terms. By computational experiments, we found a series of 3-cycles that such a sequence can turn into: 
\[2_k1,\ 2_k1,\ 2_k1_2,\]
where $k > 1$.

We also found a sequence growing indefinitely: 
\[2_k1_2,\ 2_k1_2,\ 2_{k+1}1_2,\ 2_{k+1}1_2,\ 2_{k+2}1_2,\ 2_{k+2}1_2,\]
and so on, where $k > 1$. We were surprised by the fact that the sorted Fibs and the reverse sorted Fibs were so similar. They both have exactly one sequence that grows indefinitely. To emphasize this analogy, we reversed the word Pinocchio to call this growing reverse Fibs sequence the \textit{Oihcconip} sequence.

Our goal is to prove that the eventual behavior of a reverse sorted Fibs sequence must be one of the 3-cycles or the tail of the Oihcconip sequence.

Let us compute a sum of two sorted numbers before carry. It could be of the form $2_a 1_b 3_c 2_d$, or it could be of the form $2_a 4_b 3_c 2_d$, where some of the indices might be zero. The first case happens when the number of ones of one of the numbers is greater or equal to the number of digits of the other number. Alternatively, we can say that the cases depend on whether the positions of twos overlap or not.

For the first case, we have $2_a 1_b 3_c 2_d= 2_a 1_{b-1} 3 2_{c-1} 0 2_d=2_a 3 0_{b-1} 2_{c-1} 0 2_d=2 1_{a}0 0_{b-1} 2_{c-1} 0 2_d$, which sorts to $2_{c+d}1_{a}$, assuming $b, c \geq 1$. If $c = 0$, then there is no carry, and the result is $2_{a+d}1_b$. When $b = 0$ and $c > 0$ we get $2_a 3_c 2_d = 2_{a-1} 4 2_{c-1} 0 2_d=2 1_{a} 2_{c-1} 0 2_d$, which sorts to $2_{c+d}1_{a}$. We summarize this into Table~\ref{tab:case1}.

\begin{table}[htb!]
\begin{center}
\begin{tabular}{ |c |r |r|}
\hline
  case    & before carry & after carry \\
    \hline
 $c>0$  & $2_a 1_b 3_c 2_d$ & $2_{c+d}1_{a}$ \\
 $c=0$  & $2_a 1_b 2_d$ &$2_{a+d}1_b$ \\

\hline
\end{tabular}
\end{center}
\caption{Case 1.}\label{tab:case1}
\end{table}

For the second case, we have $2_a 4_b 3_c 2_d=2_a 4_{b-1} 6 2_{c-1} 0 2_d=2_a 8 2_{b-2} 0 2_{c-1} 0 2_d=2 1 0_{a} 22_{b-2} 0 2_{c-1} 0 2_d$, which sorts to $2_{b+c+d-1} 1$, assuming $b \geq 2$ and $c \geq 1$.

For this case, we do not need to check $b=0$, as it is a subcase of the first case. We do need to check the cases $c=0$ and/or $b=1$. The summary is in Table~\ref{tab:case2}.

\begin{table}[htb!]
\begin{center}
\begin{tabular}{ |c|r |r |}
\hline
  case   & before carry & after carry \\
    \hline
$b>1$ $c>0$ & $2_a 4_b 3_c 2_d$   & $2_{b+c+d-1} 1$ \\
$b>1$ $c=0$ & $2_a 4_b 2_d$  &  $2_{b+d-1}1_{2}$ \\
$b=1$ $c>0$ & $2_a 4 3_c 2_d$   & $2_{c+d}1$ \\
$b =1$ $c=0$ & $2_a 4 2_d$   &  $2_{d+1}1_{a+1}$  \\
\hline
\end{tabular}
\end{center}
\caption{Case 2.}\label{tab:case2}
\end{table}

Combining two cases together and adding a column for previous terms we get Table~\ref{tab:twocases}.

\begin{table}[htb!]
\begin{center}
\begin{tabular}{ |c|r |r | r|}
\hline
  case   & before carry & after carry & previous numbers\\
    \hline
Case 1.  $c>0$  & $2_a 1_b 3_c 2_d$ & $2_{c+d}1_{a}$ & $2_a1_{b+c+d}$ and $2_c1_d$\\
Case 1.  $c=0$  & $2_a 1_b 2_d$ &$2_{a+d}1_b$ & $2_a1_{b+d}$ and $1_d$\\
Case 2.  $b>0$ $c>0$ & $2_a 4_b 3_c 2_d$   & $2_{b+c+d-1} 1$ & $2_{a+b}1_{c+d}$ and  $2_{b+c}1_d$; $2_{a+b+c}1_{d}$ and $1_{c+d}$; \\
Case 2. $b>1$ $c=0$ & $2_a 4_b 2_d$  &  $2_{b+d-1}1_{2}$ & $2_{a+b}1_{d}$ and $2_b1_d$\\
Case 2. $b =1$ $c=0$ & $2_a 4 2_d$   &  $2_{d+1}1_{a+1}$ &  $2_{a+1}1_{d}$ and $21_d$\\
\hline
\end{tabular}
\end{center}
\caption{Two cases together}\label{tab:twocases}
\end{table}

We start with discussing the number of twos.

\begin{lemma}
Starting from $n>1$, each element of a reverse sorted Fibs sequence contains 2.
\end{lemma}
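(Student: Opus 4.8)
The plan is to reduce the lemma to a single claim about one addition step and then read the answer off Tables~\ref{tab:case1} and~\ref{tab:case2}. By the sorting rule every term of the sequence already has the shape $2_a1_b$, so the content of the lemma is: \emph{if $x=2_{a_1}1_{b_1}$ and $y=2_{a_2}1_{b_2}$ are both nonzero, then the string obtained from $x+y$ after carries and sorting contains at least one $2$.} Granting this, the lemma follows by an easy induction on $n$, since a string containing a $2$ is in particular nonzero: taking the two initial terms to be positive sorted strings, $r_2$ comes from adding them, so the claim makes $r_2$ nonzero and gives it a $2$; then for each $n\ge 3$ the pair $(r_{n-2},r_{n-1})$ consists of nonzero strings, so $r_n$ again contains a $2$, and together with the case $n=2$ this is the lemma. (The only index subtlety is that the two starting terms must be taken positive --- for the proper reverse sorted Fibs one has $r_2=1$, which has no $2$, so the statement is really about sequences launched from the first positive pair.)

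Before carrying this out, I would note that one cannot shortcut it by the observation that every nonnegative integer $\ge 2$ begins with, hence contains, a $2$ in base $3/2$: after sorting, the strings $r_n$ need not represent integers at all (for instance $221$ has value $17/2$), so a genuinely digit-level argument is required --- which is exactly what Tables~\ref{tab:case1}--\ref{tab:twocases} provide.

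For the claim itself I would go through the five rows of Table~\ref{tab:twocases} in turn. In Case~1 with $c>0$ the result is $2_{c+d}1_a$, which has $c\ge 1$ twos. In Case~2 with $c>0$ the result is $2_{b+c+d-1}1$, and $b+c+d-1\ge b+c-1\ge 1$ since $b,c\ge 1$. In Case~2 with $b>1,\,c=0$ the result is $2_{b+d-1}1_2$, which has $b-1\ge 1$ twos. In Case~2 with $b=1,\,c=0$ the result is $2_{d+1}1_{a+1}$, which trivially contains a $2$. The one row whose conclusion is \emph{not} visible from the sorted string is Case~1 with $c=0$, where the result is $2_{a+d}1_b$; here I would invoke the ``previous numbers'' column, observing that $c=0$ forces one of the two summands to be the all-ones string $1_d$, so that its being nonzero gives $d\ge 1$, hence $a+d\ge 1$, and the result contains a $2$ after all.

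The main (and, honestly, modest) obstacle is precisely this last row: its conclusion cannot be seen from the displayed string, and one must reach back to the structure of the summands and use that neither of them is zero. All the heavier work --- identifying the two possible ``before carry'' shapes $2_a1_b3_c2_d$ and $2_a4_b3_c2_d$, performing the carries, and sorting into the five subcases --- has already been done in assembling the tables preceding the lemma, so with those in hand the proof is a short case check.
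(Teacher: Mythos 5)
Your proof is correct and rests on the same underlying facts as the paper's: every row of Table~\ref{tab:twocases} with a carry visibly produces a $2$, and the single no-carry row (Case~1 with $c=0$) is handled by noting that the shorter summand $1_d$ is nonzero, i.e., the last digits of both summands are ones and so contribute a $2$ --- which is exactly the paper's two-sentence carry/no-carry argument, just expanded into an explicit table check. Your parenthetical caveats (that the sorted strings need not be integers, and that the proper sequence has $r_2=1$) are accurate observations but do not change the substance.
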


\begin{proof}
If the sum of two terms has a carry, the result has to have a two. If it does not, the last digits of the previous terms have to be ones, and the result has to have a two.
\end{proof}

That means we can remove some cases from Table~\ref{tab:twocases} to generate the new Table~\ref{tab:leftovercases}.

\begin{table}[htb!]
\begin{center}
\begin{tabular}{ |c|c|r |r | r|}
\hline
line &  case   & before carry & after carry & previous numbers\\
    \hline
L1 & $a>0$ $c>0$  & $2_a 1_b 3_c 2_d$ & $2_{c+d}1_{a}$ & $2_a1_{b+c+d}$ and $2_c1_d$\\
L2 & $c>0$ $b > 0$ & $2_a 4_b 3_c 2_d$   & $2_{b+c+d-1} 1$ & $2_{a+b}1_{c+d}$ and $2_{b+c}1_d$ \\
L3 & $b>1$ $c=0$ & $2_a 4_b 2_d$  &  $2_{b+d-1}1_{2}$ & $2_{a+b}1_{d}$ and $2_b1_d$\\
L4 & $b =1$ $c=0$ & $2_a 4 2_d$   &  $2_{d+1}1_{a+1}$ &  $2_{a+1}1_{d}$ and $21_d$\\
\hline
\end{tabular}
\end{center}
\caption{Leftover cases}\label{tab:leftovercases}
\end{table}

We can see that now the result contains at least one one. Therefore, we can assume that $d > 0$. Now we are ready to study the eventual behavior of reverse sorted Fibs sequences.

We consider each line in Table~\ref{tab:leftovercases} separately starting from Line 2.

\begin{lemma}
If we start in Line 2, then we end up in a cycle sequence.
\end{lemma}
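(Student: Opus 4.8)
The plan is to trace what happens after applying the Line~2 rule once, and show the sequence is forced into one of the 3-cycles $2_k1,\ 2_k1,\ 2_k1_2$. In Line~2 we have $b>0$ and $c>0$, and the two previous terms are $2_{a+b}1_{c+d}$ and $2_{b+c}1_d$, with result $2_{b+c+d-1}1$. So after one step we arrive at a pair of consecutive terms of the form $2_{b+c}1_d$ and $2_{b+c+d-1}1$; I would set $p = b+c$ and $q = b+c+d-1$ and note that the new term ends in a single $1$ (i.e.\ its $1$-count is $1$). Now I would feed this pair back into Table~\ref{tab:leftovercases}: adding $2_p1_d$ and $2_q1$ before carry, and split on whether the twos overlap. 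Since the second number has only one $1$, its total length is $q+1 = b+c+d$, and the first number has $p = b+c$ twos; comparing $d$ (the ones of the first number) with $q+1$ decides the case. Because $d \ge 1$ but the second summand's $1$-block has length exactly $1$, the arithmetic here is quite constrained, and I expect to land in Line~3 or Line~4 (the $c=0$ cases), producing a term ending in $1_2$.

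The next step is to iterate once more. After the step above we should have a pair whose second element is of the form $2_m1_2$ for some $m$. Adding something of the form $2_*1_*$ to $2_m1_2$ and carrying, I would again run through the table; the key observation to extract is that the multiset $\{$number of twos in the two consecutive terms$\}$ stabilizes — this is the reverse-sorted analogue of the $M$-stability argument from the sorted case, and I would either quote the monotonicity of $\max\{$twos$\}$ directly (the same proof as the Fluctuation-type lemmas goes through: a carry never increases the two-count beyond $c+d$, which is at most the larger of the two previous two-counts) or re-derive it in one line. Once the number of twos is pinned to a constant $k$, the only freedom left is the number of ones, and the table entries with the two-count fixed show the ones-count can only be $1$ or $2$, and that it cycles $1, 1, 2$ — giving exactly the advertised 3-cycle $2_k1,\ 2_k1,\ 2_k1_2$.

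Concretely the key steps, in order, are: (1) apply Line~2 once to get a pair $2_{b+c}1_d,\ 2_{b+c+d-1}1$; (2) observe the two-count sequence is eventually constant (monotonicity of $\max$ of consecutive two-counts under the carry rules in Table~\ref{tab:leftovercases}), call the constant $k$; (3) with the two-count fixed at $k$, enumerate which lines of the table are compatible — only the $c=0$ lines (L3, L4) and the overlapping-twos subcase survive, and they force the ones-count to run through the pattern $\ldots,1,1,2,1,1,2,\ldots$; (4) read off that the triple $(2_k1,\ 2_k1,\ 2_k1_2)$ is a fixed cycle and that every compatible sequence enters it. The main obstacle I anticipate is step~(3): there are several table lines to rule out, and one must check carefully that the ``twos overlap / don't overlap'' dichotomy, together with the constraint that the two-count stays exactly $k$, eliminates the growing (Oihcconip-type) branch — that branch is precisely the case that does \emph{not} start in Line~2, so the Line~2 hypothesis should close it off, but making that rigorous is where the real bookkeeping lives. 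A secondary nuisance is handling small values of $k$ (e.g.\ $k=1$ versus $k>1$) separately, since the lemma statement for the 3-cycles assumes $k>1$.
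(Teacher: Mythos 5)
Your step (1) and your target (the 3-cycle $2_k1,\ 2_k1,\ 2_k1_2$) are right, but the route you propose from (1) to (4) has a genuine gap, and it sits exactly where you feared. The monotonicity you want to import from the sorted-Fibs $M$-stability argument is false for reverse sorted Fibs: in Line 2 the result after carry is $2_{b+c+d-1}1$, whose two-count $b+c+d-1$ can strictly exceed both previous two-counts $a+b$ and $b+c$ as soon as $d\geq 2$ (e.g.\ $a=0$, $b=c=1$, $d=5$ gives previous terms $2_11_6$ and $2_21_5$ but result $2_61$). The claim that ``a carry never increases the two-count beyond the larger of the two previous two-counts'' does not transfer from the sorted case; indeed the very existence of the Oihcconip sequence, whose two-count grows without bound, shows that $\max\{z_n,z_{n+1}\}$ is not non-increasing here. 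So your step (2) cannot be quoted or re-derived ``in one line,'' and without it step (3) has nothing to enumerate against.

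A second, smaller error is your prediction of where the pair lands after one application of Line 2. You expect Line 3 or Line 4 (``producing a term ending in $1_2$''), but the pair $\bigl(2_{b+c}1_d,\ 2_{b+c+d-1}1\bigr)$ (and likewise $\bigl(2_{a+b}1_{c+d},\ 2_{b+c+d-1}1\bigr)$ in the other ordering of the starting terms) has overlapping twos and falls back into Line 2: the sum before carry is $4_{b+c}3_{d-1}2$ (resp.\ $2_a4_b3_{c+d-1}2$), which carries and sorts to $2_{b+c+d-1}1$ again. This single further computation is the entire proof in the paper: it shows $r_3=r_2=2_{b+c+d-1}1$ (or already $r_1=r_2$ when $d=1$), so two consecutive terms are equal and of the form $2_k1$, which is precisely the entry point of the 3-cycle $2_k1,\ 2_k1,\ 2_k1_2$. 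No stability or two-count machinery is needed; replacing your steps (2)--(4) with this direct check closes the gap.
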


\begin{proof}
Recall that $b,c,d > 0$. The sum of $r_0$ and $r_1$ before carry is $2_a 4_b 3_c 2_d$ forcing $r_2 = 2_{b+c+d-1} 1$.

We consider two cases based on the order of the previous terms.

\textbf{Case 1.} $r_0=2_{a+b}1_{c+d}$ and $r_1= 2_{b+c}1_d$.

If $r_1 = r_2$, that is $d=1$, we are in a cycle sequence. If not, then $r_3= 2_{b+c+d-1} 1 =r_2$, and we get into a cycle sequence anyway.

\textbf{Case 2.} $r_0= 2_{b+c}1_d$ and $r_1=2_{a+b}1_{c+d}$.

As $c+d > 1$, for the next number we are in Line 2 again, and $r_3 = 2_{b+c+d-1} 1$. We end up in a cycle sequence again.

In any case $r_2$ is in a cycle.
\end{proof}

Now we go to Line 3.

\begin{lemma}
If we start in Line 3, then we end up in a cycle if $d \neq 2$. We also end in a cycle if $r_0=21_2$ and $r_1=2_{a+1}1_2$. Otherwise, we end in the Oihcconip sequence.
\end{lemma}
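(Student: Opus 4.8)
The plan is to iterate the recursion forward from the Line-3 configuration and split on the value of $d$, which governs the whole behavior. Being ``in Line 3'' means there are two consecutive terms $r_0,r_1$ whose sum before carrying is $2_a4_b2_d$ with $b>1$ and (as already established for the leftover table) $d>0$. Arguing exactly as in the derivation of the leftover table, the only pair of reverse-sorted numbers with this before-carry sum is $\{2_{a+b}1_d,\,2_b1_d\}$, and the ``Case~2, $b>1$, $c=0$'' row then forces $r_2=2_{b+d-1}1_2$. Since $r_2$ is symmetric in $r_0$ and $r_1$, all that is left is to iterate from the pair $(r_1,r_2)$, where $r_1$ is one of the two numbers in the pair; I would carry both choices of $r_1$ through in parallel.

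When $d\ge 3$ I would compute the before-carry sum of $(r_1,r_2)$: in either ordering it has the shape $2_{\ast}4_{\ast}3_{d-2}2_2$ with its $b$-parameter $\ge 2$ and its $c$-parameter equal to $d-2\ge 1$, so this is a Line-2 configuration and the Line-2 lemma immediately yields a cycle. When $d=1$ we have $r_2=2_b1_2$, and I would track $r_3,r_4,r_5$ explicitly with the Case~1/Case~2 tables; in either ordering one reaches within two steps a consecutive pair $(2_k1,\,2_k1)$ with $k$ equal to $b$ or $b+1$ (both $>1$). From there $2_k1+2_k1\mapsto 2_k1_2$ and $2_k1+2_k1_2\mapsto 2_k1$, so the sequence is eventually the $3$-cycle $2_k1,\,2_k1,\,2_k1_2$.

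When $d=2$ we have $r_2=2_{b+1}1_2$, and I would show that the pair $(r_2,r_3)$ has one of the two Oihcconip shapes $(2_k1_2,2_k1_2)$ or $(2_k1_2,2_{k+1}1_2)$ with $k=b+1>1$. Since $2_k1_2+2_k1_2\mapsto 2_{k+1}1_2$ and $2_k1_2+2_{k+1}1_2\mapsto 2_{k+1}1_2$ (both via the ``Case~2, $b>1$, $c=0$'' row, which stays applicable because $k>1$ is preserved), the sequence is eventually the tail of the Oihcconip sequence. The isolated low-complexity pair singled out in the statement I would dispose of by one direct hand computation, checking that it runs into a $3$-cycle rather than growing.

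The bulk of this is routine before-carry bookkeeping. The one place that needs genuine care is the $d\ge 3$ reduction: one must verify that $(r_1,r_2)$ really meets the hypotheses under which the Line-2 lemma was proved --- each term containing a $2$, and the before-carry sum having the claimed shape with all of $b,c,d$ positive --- for \emph{both} admissible values of $r_1$. If one prefers not to lean on exactly which pre-image pair the Line-2 proof used, the safe alternative is to push the explicit recursion two further steps, again landing on a pair $(2_k1,\,2_k1)$ as in the $d=1$ case and concluding the same $3$-cycle.
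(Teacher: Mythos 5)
Your handling of the generic cases is sound and is essentially the paper's argument: for $d\neq 2$ you feed the pair $(r_1,r_2)$ back into the Line-2 lemma (the paper does exactly this under the hypothesis ``$d\neq 2$ and $b+d>2$''; your separate explicit iteration for $d=1$ also works and lands on the $3$-cycle $2_b1,\,2_b1,\,2_b1_2$), and for $d=2$ you verify that $(r_2,r_3)$ has an Oihcconip shape that then propagates, again matching the paper, which splits on the two possible orderings of $r_0,r_1$ exactly as you do. The before-carry bookkeeping you outline for $d\ge 3$ (the sum has shape $2_1 4_b 3_{d-2} 2_2$ or $2_{a-1}4_{b+1}3_{d-2}2_2$, so its $b$- and $c$-parameters are positive and Line 2 applies) checks out.

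The genuine gap is your last step. The exceptional clause concerns the one-parameter family $r_0=21_2$, $r_1=2_{a+1}1_2$, so ``one direct hand computation'' cannot cover it; worse, the computation refutes rather than confirms what you intend to check. Take $a=1$: $r_0=211$ and $r_1=2211$ have before-carry sum $2422$, hence $r_2=2_31_2$; then $r_1+r_2$ gives $r_3=2_31_2$ and $r_2+r_3$ gives $r_4=2_41_2$, i.e., the sequence enters the tail of the Oihcconip sequence, not a cycle. (Only $a=0$, where $r_0=r_1=21_2$, falls into the cycle $2_21,\,2_21,\,2_21_2$.) Note also that the before-carry sum of this exceptional pair is $2_a 4_1 2_2$, which is a Line-4 configuration ($b=1$), not a Line-3 one, and the paper's own proof never establishes this clause either: every $d=2$ branch there concludes with the Oihcconip sequence. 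So you should not plan to ``dispose of'' this clause by verification --- as literally stated it fails for $a\ge 1$. What your argument (and the paper's) actually proves is the clean dichotomy: a Line-3 start with $d\neq 2$ ends in a cycle, and a Line-3 start with $d=2$ ends in the tail of the Oihcconip sequence.
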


\begin{proof}
Recall that $b,d > 0$. The sum of $r_0$ and $r_1$ before carry is $2_a 4_b 2_d$.

We consider two cases based on the order of the previous terms.

\textbf{Case 1.} Suppose $r_0=2_{a+b}1_{d}$ and $r_1=2_b1_d$.

Then $r_2 =  2_{b+d-1} 1_2$. We consider cases.

\begin{itemize}
\item If $b=d=1$, then $r_1 = 21$, $r_2 = 21_2$, $r_3 = 2_21$, and $r_4 = 2_21$. We are in a cycle sequence starting from $r_3$.
\item If $d \neq 2$ and $b+d > 2$, then $r_1$ and $r_2$ correspond to Line 2 and the next term, $r_3$ must be in a cycle sequence.
\item If $d = 2$ and $b>1$, then $r_1=2_b1_2$ and $r_2 =  2_{b+1} 1_2$ and we are in the Oihcconip sequence starting from $r_1$.
\item If $d = 2$ and $b=1$, then $r_1=2_11_2$, $r_2 =  2_2 1_2$, and $r_3 = 2_31_2$. We are in the Oihcconip sequence starting from $r_2$.
\end{itemize}

\textbf{Case 2.} Suppose $r_0=2_b1_d$ and $r_1=2_{a+b}1_{d}$. We can assume that $a>0$, otherwise we are covered by the previous case. 

Then $r_2 =  2_{b+d-1} 1_2$. We consider cases.

\begin{itemize}
\item If $d \neq 2$, then $r_1$ and $r_2$ correspond to Line 2 and the next term, $r_3$, must be in a cycle sequence.
\item If $d=2$ and $b \geq 1$, we have $r_1 = 2_{a+b}1_2$ and $r_2 = 2_{b+1}1_2$. As $b> 1$, we are now in Line 3, and $r_3 =  2_{b+2}1_2$. Therefore, we are in the Oihcconip sequence starting from $r_2$.
\end{itemize}
\end{proof}

Now we go to Line 1.

\begin{lemma}
If we start in Line 1, then we end up in the Oihcconip sequence or a cycle.
\end{lemma}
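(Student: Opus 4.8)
The plan is to follow the same template used for Lines~2 and~3: write down the two possible orderings of the two preceding terms, add them in base $3/2$, read off the resulting before-carry string (of the form $2_a1_b3_c2_d$ or $2_a4_b3_c2_d$), locate that string in Table~\ref{tab:leftovercases}, and then either invoke an already-proven lemma or recurse. Recall that in Line~1 we have $a,c>0$, and by the remark after Table~\ref{tab:leftovercases} also $d>0$ (and $b\ge 0$); the unordered pair of preceding terms is $\{\,2_a1_{b+c+d},\ 2_c1_d\,\}$, and the new term is $r_2=2_{c+d}1_a$.

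First I would split on which of the two preceding terms comes earlier. In the ordering where $2_c1_d$ is first, the next sum to analyze is $2_c1_d+2_{c+d}1_a$; in the other ordering it is $2_a1_{b+c+d}+2_{c+d}1_a$. In each ordering, whether the two blocks of twos overlap is decided by comparing the number of ones of the longer summand with the total number of digits of the shorter, which reduces to an inequality among $a$, $c$, $d$, $b$. Carrying out that comparison, I expect the following dichotomy: either the new pair already falls in Line~2, Line~3, or Line~4 of Table~\ref{tab:leftovercases}, in which case the conclusion follows from the Line~2 and Line~3 lemmas (Line~4, the string $2_a42_d$, being a short direct check analogous to the $b=1$ subcases of the Line~3 lemma); or the new pair falls back into Line~1, but with its first block length replaced in a controlled way, e.g. $a\mapsto c+d$ or $b\mapsto b-a$, while the pair $(c,d)$ is, up to the obvious substitution, essentially preserved.

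The heart of the argument is then to show the sequence cannot remain in Line~1 forever unless it is already a $3$-cycle $2_k1,\,2_k1,\,2_k1_2$ or the tail of Oihcconip. For this I would exhibit a monovariant — I expect the total number of digits of the larger of two consecutive terms (or something like $\max\{a,b\}$) to be nonincreasing along Line-1 steps — and show that after at most a bounded number of such steps the first block length drops below $c+d$, which forces the very next sum into the overlapping-twos (Case~2) regime and hence into Line~2 or Line~3; in the degenerate branches the parameters instead collapse to a configuration of the special shape $2_k1_2$ (the $d=2$ situation that already governed the Line~3 lemma), which seeds Oihcconip, or to $2_k1$, which seeds one of the cycles. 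Once the monovariant strictly decreases outside the terminal patterns, the induction closes.

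The main obstacle is essentially the bookkeeping: each of the two orderings splits further according to the relative sizes of $a,b,c,d$, producing a fair number of before-carry shapes, and one must verify for every branch that it either lands in an already-settled line or strictly decreases the monovariant — in particular ruling out a "breathing" Line-1 loop that is neither one of the $2_k1,\,2_k1,\,2_k1_2$ cycles nor the Oihcconip tail. A secondary point to handle carefully is that some reductions land in Line~4, whose eventual behaviour (a $3$-cycle, or, when the relevant parameter equals $2$, the start of Oihcconip) should be pinned down explicitly so that the chain of reductions genuinely terminates.
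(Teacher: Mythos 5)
Your overall strategy is the paper's: split on the order of the two preceding terms $2_a1_{b+c+d}$ and $2_c1_d$, form the next before-carry string, and reduce to the already-settled Lines 2 and 3. But the proposal has a genuine gap exactly where you flag ``the heart of the argument'': you do not actually show that the Line-1-to-Line-1 recursion terminates, you only \emph{expect} a monovariant to exist. The candidate you name --- the total number of digits of the larger of two consecutive terms --- cannot work as stated, since it grows along the Oihcconip tail; and the mechanism you conjecture (the first block length dropping below $c+d$ after boundedly many steps) is not what happens. In the paper the resolution is much more rigid: an explicit comparison of $a$ against $d$, $c+d$, $b$, and $b+c+d$ in each of the two orderings shows that every branch lands on Line 2 or Line 3 within one or two steps, and the only branches that return to Line 1 at all (namely $a\ge 2c+2d$ in one ordering, $a\le b$ in the other) produce a term of the special form $2_{c+d}1_{c+d}$ (resp.\ $2_{c+d+a}1_a$) whose \emph{next} step is forced onto Line 3. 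So the recursion re-enters Line 1 at most once; no monovariant or induction is needed, only the finite case check you deferred.

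A secondary inaccuracy: you worry that some reductions land in Line 4 and that its behaviour must be ``pinned down explicitly'' first. In the paper's order of argument Line 4 is treated \emph{after} Line 1, and the proof of the Line 1 lemma never invokes it; in the one subcase where a sum of shape $2_a4\,2_d$ arises (the $a=d$, $c=1$ branch), one simply computes one more term directly from Table~\ref{tab:case2} and finds oneself on Line 2. So the dependency you anticipate does not arise, but if you structured the proof your way you would indeed need to prove Line 4 first, creating a circularity to be checked against the paper's Line 4 lemma, which in turn cites Lines 1--3.
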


\begin{proof}
The two previous terms are $2_a1_{b+c+d}$ and $2_c1_d$. Also, $a, c, d > 0$. We already know that we must eventually enter a cycle or the Oihcconip sequence if we get to Line 2 or 3.

We consider two cases based on the order of the previous terms. 

\textbf{Case 1.} Suppose $r_0 = 2_a1_{b+c+d}$, $r_1 = 2_c1_d$, and  $r_2 = 2_{c+d}1_{a}$. Now we see which line corresponds to $r_1$ and $r_2$.

\begin{itemize}
\item If $a < d$ we are on Line 2.
\item If $a = d$, and $c > 1$, we are on Line 3.
\item If $a=d$, and $c=1$, we have $r_2 = 2_{d+1}1_{d}$ and we get $r_3 = 2_{d+1}1_{d+1}$ putting us on Line 2.
\item If $d < a < c+d$, we are on Line 3.
\item If $a \geq c+d$, we get $r_3=2_{c+d} 1_{c+d}$. For the next step we have subcases. a) If $c+d\leq a<2c+2d$, we are on Line 2. b) If $a \geq 2c+2d$, then we are on Line 1 and $r_4=2_{2c+2d}1_{c+d}$. After that we get to Line 3 again.
\end{itemize}

\textbf{Case 2.} Suppose $r_0 = 2_c1_d$ and $r_1 = 2_a1_{b+c+d}$, and  $r_2 = 2_{c+d}1_{a}$.

\begin{itemize}
\item If $a > b+c+d$, we are on Line 2.
\item If $a = b+c+d$, we are on Line 3.
\item If $b < a < b+c+d$, we are on Line 2.
\item If $a \leq b$, then we are on Line 1 and $r_3=2_{c+d+a}1_a$. For the next step we are on Line 3.
\end{itemize}
\end{proof}

Finally we go to Line 4.

\begin{lemma}
If we start in Line 4, then we end up in the tail of the Oihcconip sequence or a cycle.
\end{lemma}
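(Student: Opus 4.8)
The statement for Line 4 is exactly parallel to the lemmas already proved for Lines 1, 2, and 3, so the plan is to imitate those proofs: start from the two previous terms given in Table~\ref{tab:leftovercases} (row L4), namely $2_{a+1}1_{d}$ and $21_{d}$ with the result $r_2 = 2_{d+1}1_{a+1}$, and then perform a finite case analysis on the relative sizes of the parameters, tracking which \emph{line} of Table~\ref{tab:leftovercases} each consecutive pair of terms falls into. Since Lines 2 and 3 have already been shown to lead either to a cycle or to the Oihcconip sequence, it suffices to show that after a bounded number of steps any Line-4 start produces a pair of terms that matches Line 1, 2, or 3 — and for the Line-1 case we may invoke the Line-1 lemma, which in turn reduces to Lines 2 and 3.

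\textbf{Key steps.} First I would split into two cases according to the order of the two previous terms: Case 1, $r_0 = 2_{a+1}1_d$ and $r_1 = 21_d$; Case 2, $r_0 = 21_d$ and $r_1 = 2_{a+1}1_d$ (here we may assume $a>0$, else $r_0=r_1$ and we are immediately in a cycle). In Case 1, compute $r_2 = 2_{d+1}1_{a+1}$ and then determine, by comparing $a+1$ (the number of trailing ones of $r_2$) against the digit-count of $r_1 = 21_d$ and against $d+1$ (the number of twos of $r_2$), which of the overlapping/non-overlapping subcases of the addition table applies — this tells us whether the pair $(r_1, r_2)$ sits on Line 1, 2, or 3. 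Small boundary values of $d$ (notably $d=1$ and $d=2$, since Line 3 behaves specially at $d=2$) should be handled as explicit sub-bullets, exactly as in the Line-3 lemma. Case 2 is analogous, with $r_1 = 2_{a+1}1_d$ now the ``wide'' term. In every branch the conclusion is: either two consecutive terms already coincide (a cycle), or the pair lands on a line whose lemma we have proved, giving a cycle or the Oihcconip tail; so after finitely many steps we are done.

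\textbf{Main obstacle.} The only real difficulty is bookkeeping: making sure the case split on the parameters is genuinely exhaustive and that each branch is routed to an already-established line without circularity. The potential circularity (Line 1's lemma is invoked from within the Line-4 proof, and Line 1 itself defers to Lines 2 and 3) is harmless because Lines 2 and 3 are proved outright, with no forward reference to Line 1 or Line 4; I would state this explicitly. The other subtlety is that a ``cycle'' here means one of the $3$-cycles $2_k1,\ 2_k1,\ 2_k1_2$, so when two consecutive terms coincide I should check that the third term closes the cycle rather than merely asserting periodicity — but this is a one-line verification identical to the ones in the Line-2 and Line-3 proofs. I expect the whole argument to be three or four short bulleted sub-cases per ordering, with no computation heavier than adding two strings of the form $2_i1_j$ before carry.
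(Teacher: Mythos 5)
Your plan follows the paper's proof of this lemma essentially step for step: the same split into two cases by the order of $r_0=2_{a+1}1_d$ and $r_1=21_d$, the same computation $r_2=2_{d+1}1_{a+1}$, and the same strategy of comparing parameters to decide which line of Table~\ref{tab:leftovercases} the pair $(r_1,r_2)$ lands on, then deferring to the already-proved lemmas for Lines 1--3 (and your remark that invoking Line 1 is not circular, since Line 1 reduces to Lines 2 and 3 which are proved outright, is exactly the implicit logic of the paper). One concrete flaw: in Case 2 you dismiss $a=0$ by saying that $r_0=r_1$ puts us ``immediately in a cycle.'' That inference is false for these sequences --- two equal consecutive terms do not force periodicity; the Oihcconip sequence itself has $2_k1_2, 2_k1_2, 2_{k+1}1_2,\ldots$ with repeated consecutive terms yet grows forever. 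Indeed, with $r_0=r_1=21_d$ and $d>1$ one gets $r_2=2_{d+1}1\neq r_1$, and one must run the pair $(r_1,r_2)$ through the table (it lands on Line 2, whence a cycle) rather than stop. The fix is simply not to exclude $a=0$: your own general comparison of $a+1$ against $d$ handles it, exactly as the paper's bullet ``$a+1\neq d$: Line 2'' does. With that shortcut removed, the argument is sound and matches the paper's.
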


\begin{proof}
The two previous terms are $2_{a+1}1_{d}$ and $21_d$. Also $d > 1$. We already know that we must eventually enter a cycle or the Oihcconip sequence if we get to Line 1, 2 or 3.

We consider two cases based on the order of the previous terms. Now we see which line corresponds to $r_1$ and $r_2$.

\textbf{Case 1.} Suppose $r_0= 2_{a+1}1_{d}$, $r_1 = 21_d$, then $r_2 = 2_{d+1}1_{a+1}$. 

\begin{itemize}
\item If $a \geq d$, we are on Line 1.
\item If $a = d-1$, then $r_2 = 2_{d+1}1_{d}$ and $r_3 = 2_{d+1}1_{d+1}$. Now we get on Line 3.
\item If $a < d-1$, we are on Line 3.
\end{itemize}

\textbf{Case 2.} Suppose $r_0 = 21_d$ and $r_1 = 2_{a+1}1_{d}$, and  $r_2 = 2_{d+1}1_{a+1}$.

\begin{itemize} 
\item If $a+1 \neq d$, we are on Line 2.
\item If $a+1 = d \neq 1$, then we are on Line 3.
\item If $a+1 =d=1$, then $r_1=21$ and $r_2=2_21$. Then $r_3= 2_21_2$ and we get into a cycle.
\end{itemize}
\end{proof}

We can summarize the results into the following theorem.

\begin{theorem}
We always end in a cycle or the tail of the Oihcconip sequence.
\end{theorem}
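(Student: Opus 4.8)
The plan is to derive the theorem as a short synthesis of the four lemmas already proved for Lines~1--4 of Table~\ref{tab:leftovercases}, together with one stabilization observation.

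\emph{Step 1: reduce to the four lines.} First I would show that there is an index $N$ beyond which every term $r_n$ contains at least one $1$ and at least one $2$, i.e.\ $r_n = 2_{x_n}1_{y_n}$ with $x_n,y_n\ge 1$. That every term from $n>1$ on contains a $2$ is the lemma immediately preceding Table~\ref{tab:leftovercases}. For the digit $1$, recall that the after-carry column of Table~\ref{tab:leftovercases} always contains a $1$ (this is the remark ``the result contains at least one one''), so it is enough to know that, for $n$ large, the before-carry sum $r_{n-2}+r_{n-1}$ has one of the four shapes listed there. The before-carry shapes from Table~\ref{tab:twocases} that are \emph{not} in Table~\ref{tab:leftovercases} are precisely ``$2_a1_b2_d$'' and the $a=0$ instances of ``$2_a1_b3_c2_d$''; a short case analysis shows that each of these forces one of the two summands $r_{n-2},r_{n-1}$ to be a string $1_y$ consisting of ones only, which contradicts the ``contains a $2$'' lemma once both indices exceed $1$. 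Hence for all $n\ge N$ the pair $(r_{n-2},r_{n-1})$ lies on exactly one of Lines L1, L2, L3, L4.

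\emph{Step 2: invoke the line lemmas.} Fix such an $n$ and split into four cases according to which line the pair $(r_{n-2},r_{n-1})$ occupies. Line~2 gives, by the Line~2 lemma, that the sequence is in a cycle; Line~3 gives, by the Line~3 lemma, a cycle or the Oihcconip sequence; Line~1 gives, by the Line~1 lemma, the Oihcconip sequence or a cycle; and Line~4 gives, by the Line~4 lemma, the tail of the Oihcconip sequence or a cycle. Since a sequence that ``turns into the Oihcconip sequence'' eventually coincides with one of its tails, in every case the sequence ends in a cycle or in a tail of the Oihcconip sequence, which is exactly the claim.

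The genuine work is of course inside the four line lemmas, and the one subtle point there --- which I would flag as the main obstacle --- is that the lemmas for Lines~1 and~4 are proved by reducing to the behavior on other lines (Line~1 reduces to Lines~2 or~3, or once more to Line~1 before landing on Line~3; Line~4 reduces to Lines~1,~2, or~3). One must therefore (i) prove the lemmas in the order Line~2, Line~3, Line~1, Line~4 so that no circularity arises, and (ii) check that each such reduction strictly makes progress --- e.g.\ that the single self-referential Line~1 step is necessarily followed by a Line~3 pair --- so that the chain of reductions terminates after a bounded number of steps. Granting those lemmas, the theorem is exactly the bookkeeping of Steps~1 and~2.
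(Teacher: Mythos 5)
Your proposal is correct and matches the paper's intent exactly: the theorem is stated as a summary whose proof is precisely the synthesis of the "contains a 2" lemma (which trims Table~\ref{tab:twocases} down to the four lines and yields $d>0$) with the four line lemmas, proved in the order Line~2, Line~3, Line~1, Line~4 just as you prescribe. Your explicit flagging of the reduction order and of the one self-referential Line~1 step (which the paper's Line~1 lemma resolves by showing it lands on Line~3 immediately afterward) is a fair and accurate reading of what the paper leaves implicit.
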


\section{Acknowledgements}

We are grateful to PRIMES STEP for allowing us to do this research. We also want to thank Prof.~James Propp for helpful suggestions
and Glen Whitney for pointing us to the tree.

\bigskip
\hrule
\bigskip

\noindent 2010 {\it Mathematics Subject Classification}: Primary 11B99; Secondary 11A99.

\noindent \emph{Keywords:} fractional bases, base 3/2, sorted Fibonacci.

\bigskip
\hrule
\bigskip

\noindent (Concerned with sequences 
\seqnum{A005150}, 
\seqnum{A005428},
\seqnum{A005836},
\seqnum{A006997},
\seqnum{A006999}, 
\seqnum{A024629}, 
\seqnum{A061419}, 
\seqnum{A069638},
\seqnum{A070885},
\seqnum{A073941},
\seqnum{A081848},
\seqnum{A237575},
\seqnum{A246435},
\seqnum{A303500},
\seqnum{A304023},  
\seqnum{A304024},
\seqnum{A304025},
\seqnum{A304272},
\seqnum{A304273},
\seqnum{A304274},
\seqnum{A305497},
\seqnum{A305498},
\seqnum{A305658},
\seqnum{A305659},
\seqnum{A305660}, and
\seqnum{A305753})

\begin{thebibliography}{9}

\bibitem{CM} Crikey Math, Patterns in the sesquinary representations of integers, available at \url{http://www.crikeymath.com/2017/11/04/patterns-in-the-sesquinary-representations-of-integers/}, accessed July 2018.

\bibitem{GPS} J.~Gerver, J.~Propp, and J.~Simpson, Greedily partitioning the natural numbers into sets free of arithmetic progression, {\it Proc. Amer. Math. Soc.}, v. 102, n3, (1988) pp. 765--772.

\bibitem{JP} J.~Propp, How Do You Write One Hundred in Base 3/2?, available at \url{https://mathenchant.wordpress.com/2017/09/17/how-do-you-write-one-hundred-in-base-32/}, accessed July 2018

\bibitem{JT} J.~Tanton, {\it Companion Guide to Exploding dots}, James Tanton, 2013.

\bibitem{GW} G.~Whitney, private communication.



\end{thebibliography}
\end{document}